\documentclass[11pt]{article}
\usepackage{lmodern}
\usepackage{lmodern}
\usepackage[T1]{fontenc}
\usepackage[utf8]{inputenc}
\usepackage{color}
\usepackage{mathtools}
\usepackage{enumitem}
\usepackage{amsmath}
\usepackage{amsthm}
\usepackage{amssymb}
\usepackage{geometry}
\usepackage{setspace}
\usepackage[authoryear,round]{natbib}
\usepackage{microtype}
\usepackage[bookmarks=false,
 breaklinks=false,pdfborder={0 0 1},backref=section,colorlinks=true]
 {hyperref}
\hypersetup{
 linkcolor=blue!50!black,citecolor=blue!50!black,urlcolor=blue!50!black}

\makeatletter

\@ifundefined{date}{}{\date{}}
\usepackage{amsthm}\usepackage{mathrsfs}
\usepackage{bm}
\usepackage{booktabs}
\usepackage{tabularx}\usepackage{array}\usepackage{longtable}\usepackage{xcolor}
\usepackage{float}
\usepackage{url}

\newtheorem{theorem}{Theorem}[section]
\newtheorem{proposition}[theorem]{Proposition}\newtheorem{corollary}[theorem]{Corollary}\newtheorem{assumption}{Assumption}[section]\theoremstyle{definition}
\newtheorem{definition}[theorem]{Definition}\theoremstyle{remark}
\newtheorem{remark}[theorem]{Remark}

\title{On rates of convergence for sample average approximations without smoothness}
\author{Hien Duy Nguyen$^{1,2}$, Jacob Westerhout$^3$, and Xin Guo$^3$\\[0.4em]
{\small $^1$School of Computing, Engineering, and Mathematical Sciences,}\\
{\small La Trobe University, Bundoora, VIC 3086, Australia;}\\
{\small $^2$Institute of Mathematics for Industry, Kyushu University,}\\
{\small Nishi Ward, Fukuoka 819-0395, Japan}\\[0.2em]
{\small $^3$School of Mathematics and Physics,}\\
{\small The University of Queensland, St Lucia, QLD 4072, Australia}}

\makeatother

\begin{document}
\maketitle 

\begin{abstract}
Sample average approximation (SAA) replaces an intractable expected
objective by an empirical average and is a basic device of modern
stochastic optimization. We develop a rate theory for optimal values
and empirical $\varepsilon$-minimizers that does not assume continuity,
lower semicontinuity, or smooth perturbation structure of the sample
objectives. Working on $\ell^{\infty}(X)$ with the Hoffmann--Jørgensen
outer-probability formalism, we show that uniform control of the empirical
objective process transfers deterministically to convergence rates
for optimal values, excess risks of empirical $\varepsilon$-minimizers,
and, under a sharp-growth condition, distances to the expected objective
solution set. Combined with the directional differentiability of the
infimum functional, this yields weak limits for empirical optimal
values at the $n^{-1/2}$ scale. Combined with LILs and maximal inequalities,
it yields outer almost-sure and outer-mean rates. The definability, envelope,
and VC-subgraph hypotheses are verified for definable discontinuous
or non-Lipschitz classes arising in direct $0$--$1$ classification,
fixed-architecture neural networks, threshold regression, and non-Lipschitz
$\ell_{p}$-type objectives with rational $0<p<1$. Practical sufficient conditions for 
measurability hypotheses are discussed.
Together, the framework extends continuity-based SAA
theory to a tame-topological setting. 
\end{abstract}

\noindent\textbf{Keywords:} sample average approximation; empirical processes; VC-subgraph classes; o-minimal structures; discontinuous optimization.

\section{Introduction}

Sample average approximation (SAA) is a central tool in stochastic
optimization. One starts from a problem of the form 
\begin{equation}
\inf_{x\in X}f(x),\qquad f(x)=\mathbb{E}[h(x,\xi)],\label{eq:intro-sp}
\end{equation}
where $X\subseteq\mathbb{R}^{d}$ is a feasible set, $\xi$ is a random
element, and the expectation in \eqref{eq:intro-sp} is unavailable
in closed form or too expensive to evaluate directly. Given independent
copies $\xi_{1},\dots,\xi_{n}$ of $\xi$, one replaces $f$ by the
empirical objective 
\begin{equation}
\hat{f}_{n}(x)=\frac{1}{n}\sum_{i=1}^{n}h(x,\xi_{i})\label{eq:intro-saa}
\end{equation}
and studies the approximate optimization problem $\inf_{x\in X}\hat{f}_{n}(x)$.
This is the standard SAA paradigm treated systematically in Chapter~5
of \citet{ShapiroDentchevaRuszczynski2021} and surveyed in \citet{KimPasupathyHenderson2015}.
The method is ubiquitous within stochastic programming, simulation
optimization, statistical learning, empirical-risk minimization, and
data-driven decision making.

The classical asymptotic theory is now well established in smooth
settings. In the framework of perturbation analysis for stochastic
programs, \citet{Shapiro1991} obtained weak convergence for optimal
values at the $n^{-1/2}$ scale. Under additional differentiability,
uniqueness, and second-order regularity assumptions, one obtains the
same order for minimizers and solution mappings; see again \citet[Chapter~5]{ShapiroDentchevaRuszczynski2021}.
A shortcoming of these results is that they are designed for objectives
that admit smooth local expansions and therefore do not cover discontinuous
objective classes or other pathologies such as unbounded derivatives.

A substantial refinement was obtained by \citet{Banholzer2022}.
Working in the Banach spaces $C(X)$ and $C^{1}(X)$, and using Banach-space
compact and bounded LILs due to Kuelbs \citep[Theorem~4.4]{Kuelbs1976}
and \citet[Theorem~4.1]{Kuelbs1977}, they proved almost-sure and
mean convergence rates for the empirical objective process, for optimal
values, and, under growth conditions, for minimizers. Their analysis
is an important advance that moves beyond weak convergence and yields
explicit almost-sure and mean statements. At the same time, the basic
hypotheses remain continuity-based: the sample objectives are embedded
into spaces of continuous or continuously differentiable functions,
and assumptions such as Lipschitz continuity of $h(\cdot,\xi)$ are
structural in the framework.

The present manuscript starts from the observation that, at the level
of optimal values, continuity is not necessary. The quantity of interest
is the infimum functional on the space of bounded objectives. In the
recent work of \citet{Westerhout2024}, the minimum empirical risk
is analyzed directly as the image of the empirical process under the
infimum map on $\ell^{\infty}(X)$, the Banach space of bounded real-valued
functions on $X$ equipped with the supremum norm. The key observation
is that the infimum map is Hadamard directionally differentiable on
$\ell^{\infty}(X)$ (cf. \citealp[Theorem~3.1]{Westerhout2024}).
This viewpoint naturally accommodates the Hoffmann--Jørgensen outer-probability
formalism (see e.g. \citealp{vdVW2023}), which is the standard technical
setting for empirical processes indexed by nonseparable or discontinuous
classes without requiring Borel measurability as $\ell^{\infty}(X)$-valued
random variables.

Our aim is to combine these two strands of research. We keep the infimum-centered
$\ell^{\infty}(X)$ viewpoint of \citet{Westerhout2024} for the value
process, and we combine it with deterministic perturbation bounds
in the spirit of the optimization-transfer arguments underlying \citet{Banholzer2022}.
The outcome is a theory in which any uniform bound on the empirical
objective process---weak, in outer probability, outer almost surely, or in
outer mean---immediately yields the corresponding rate for the optimal
value and for empirical $\varepsilon$-minimizers. Exact minimizer
localization is then recovered as an optional refinement under a sharp-growth
condition. In particular, the core results do not assume continuity,
lower semicontinuity, or even attainment of the minimum.

Our second tool is o-minimal definability. The optimization and variational
analysis literature have long recognized definability in o-minimal
structures on the ordered field of real numbers as a natural regularity
framework; see \citet{Ioffe2009}, \citet{BolteDaniilidisLewis2009},
and \citet{BoltePauwels2016}. One reason is that definable families
are tame and have finite combinatorial complexity. In particular,
Chapter~5 of \citet{vdDries1998} shows that definable set systems
have finite Vapnik--Chervonenkis (VC) dimension; see also \citet[Section~2.6]{vdVW2023}
for the empirical-process consequences of VC-subgraph complexity.
Together with the standard structural results summarized in \citet{LeLoi2010}
and \citet[Chapter~8]{Pila2022}, this provides a route to empirical-process
control that does not depend on continuity of the sample paths. The
route is especially effective for discontinuous classes built from
inequalities, equalities, or threshold rules.

This perspective is reinforced by explicit quantitative bounds such
as \citet[Theorems~8.4 and~8.14]{AnthonyBartlett2009} which prove
polynomial VC-dimension bounds for classes within o-minimal structures
that are computed by finite arithmetic, comparison, indicator, and
exponential programs. Further, \citet{MontanaPardo2009} obtain analogous
polynomial bounds for concept classes generated by Pfaffian trees
and circuits. Consequently, direct $0$--$1$ classification losses
for perceptrons \citep{LiLin2007,NguyenSanner2013}, fixed-architecture
neural networks \citep[Section~7.2]{Westerhout2024}, threshold-regression
criteria \citep{Hansen2000,KoulQianSurgailis2003}, and robust sparse
nonconvex regression losses \citep{YeYingShaoLiChen2017,BucciniEtAl2020}
can be placed within an o-minimal, finite-complexity setting, despite
being discontinuous or nonsmooth in the optimization parameter.

The main contributions of the manuscript are therefore the following.
At the abstract level, the paper formulates SAA entirely on $\ell^{\infty}(X)$
and proves deterministic perturbation inequalities showing that sup-norm
fluctuations of the empirical objective control optimal values, $\varepsilon$-minimizers,
and, under sharp growth, exact solution-set distances. At the probabilistic
level, the generalized delta method for directionally differentiable
maps transfers weak convergence of the empirical process to weak convergence
of the empirical optimal value, thereby recovering $n^{-1/2}$ outer-probability
rates without smoothness. The same deterministic perturbation mechanism
converts LILs and maximal inequalities into outer almost-sure and outer-mean
rate statements. At the model-verification level, o-minimal definability
and finite VC-subgraph complexity make it possible to check the definability,
envelope, and VC-subgraph hypotheses for discontinuous optimization
classes, including square-integrably enveloped classes for which the
weak, outer-mean, and LIL inputs all remain available. Relevant
measurability hypotheses are also discussed, along with their verification.

The manuscript proceeds as follows. Section~\ref{sec:prelim} collects
the mathematical preliminaries on outer probability, the infimum functional,
VC-subgraph classes, o-minimal structures, restricted analytic and
Pfaffian functions, and the empirical-process results that will be
used later. Section~\ref{sec:setup} formulates the SAA problem and
fixes the standing notation. Section~\ref{sec:main} contains the
deterministic perturbation theorems, the weak-limit theorem for optimal
values, and the rate consequences. Section~\ref{sec:applications}
develops explicit discontinuous and non-Lipschitz examples. Section~\ref{sec:conclusion}
closes with a brief discussion of scope and possible extensions. Proofs and technical results
are collected in Appendix~\ref{app:proofs}.

\section{Mathematical preliminaries}

\label{sec:prelim}

\subsection{\texorpdfstring{Outer probability, outer expectation, and $\ell^{\infty}(X)$}{Outer
probability, outer expectation, and l-infinity(X)}}

Let $(\Omega,\mathcal{F},\mathbb{P})$ be a probability space. For
a possibly nonmeasurable map $U:\Omega\to\overline{\mathbb{R}}=\mathbb{R}\cup\{\pm\infty\}$,
the outer expectation is defined by 
\begin{equation}
\mathbb{E}^{*}U=\inf\bigl\{\mathbb{E}[V]:V\ge U,\;V\text{ measurable, and }\mathbb{E}[V]\text{ exists}\bigr\},\label{eq:outer-expectation}
\end{equation}
and the outer probability of a set $A\subseteq\Omega$ is 
\[
\mathbb{P}^{*}(A)=\mathbb{E}^{*}\mathbf{1}_{A}.
\]
This is the Hoffmann--Jørgensen formalism used by \citet{vdVW2023}.
If $(U_{n})$ is a sequence of real maps on $\Omega$ and $(r_{n})$
is a positive deterministic sequence, we write 
\[
U_{n}=O_{\mathbb{P}^{*}}(r_{n})
\]
if for every $\varepsilon>0$ there exists $M<\infty$ such that 
\[
\limsup_{n\to\infty}\mathbb{P}^{*}\bigl(\lvert U_{n}\rvert>Mr_{n}\bigr)\le\varepsilon,
\]
and similarly $U_{n}=o_{\mathbb{P}^{*}}(r_{n})$ if $U_{n}/r_{n}\to0$
in outer probability.

Following \citet[Lemma~1.2.1]{vdVW2023}, write $U^{*}$ for
a version of the minimal measurable majorant of a real map $U$. For
outer almost-sure order statements involving possibly nonmeasurable
maps, following \citet[Definition~1.9.1 and Lemma~1.9.2]{vdVW2023}, we write
\[
U_{n}=O_{\mathrm{as}^{*}}(r_{n})
\]
if, for some versions of the minimal measurable majorants,
\[
\limsup_{n\to\infty}\frac{\lvert U_{n}\rvert^{*}}{r_{n}}<\infty\qquad\text{almost surely}.
\]
The notation $U_{n}=o_{\mathrm{as}^{*}}(r_{n})$ is defined analogously
with the displayed limsup replaced by convergence to zero. When the
$U_{n}$ are measurable, these are the ordinary almost-sure order
relations. For sequences this is the rate analogue of the outer almost-sure
mode that is equivalent to almost uniform convergence in \citet[Lemma~1.9.2]{vdVW2023}.
For iterated logarithms we write 
\[
\mathrm{LL}(u)=1\vee\log\log(u\vee e^{e}),\qquad u\ge0.
\]

For any nonempty set $A$, the symbol $\ell^{\infty}(A)$ denotes
the Banach space of all bounded real-valued functions $g:A\to\mathbb{R}$,
equipped with the norm 
\[
\lVert g\rVert_{\infty}=~\sup_{a\in A}\lvert g(a)\rvert.
\]
Thus $\ell^{\infty}(A)$ is simply the space of bounded functions
on $A$ with the supremum norm. Weak convergence in $\ell^{\infty}(A)$
is understood in the Hoffmann--Jørgensen sense of
\citet[Definition~1.3.3]{vdVW2023}; the bounded-function-space
setting is discussed in \citet[Section~1.5]{vdVW2023}. In this mode of
weak convergence, the empirical process is not assumed to be Borel
measurable as an $\ell^{\infty}(A)$-valued random element.

\subsection{\texorpdfstring{The infimum functional and $\varepsilon$-minimizers}{The
infimum functional and epsilon-minimizers}}

Let $X$ be a nonempty set. For $g\in\ell^{\infty}(X)$ define the
infimum functional 
\begin{equation}
\iota(g)=\inf_{x\in X}g(x).\label{eq:iota-def}
\end{equation}
Given $f\in\ell^{\infty}(X)$ and $\varepsilon\ge0$, define the $\varepsilon$-minimizer
set 
\begin{equation}
S_{f}^{\varepsilon}=\{x\in X:f(x)\le\iota(f)+\varepsilon\}.\label{eq:Sfeps}
\end{equation}
The set $S_{f}^{\varepsilon}$ is nonempty for every $\varepsilon>0$,
whereas $S_{f}^{0}$ may be empty when the infimum is not attained.

The basic structural result is the directional differentiability of
the infimum map; i.e. Theorem~3.1
by \citet{Westerhout2024}.

\begin{theorem}[Directional differentiability of the infimum map]\label{thm:infdir}
Let $X$ be a nonempty set, let $\iota:\ell^{\infty}(X)\to\mathbb{R}$
be defined by \eqref{eq:iota-def}, and let $f,g\in\ell^{\infty}(X)$.
Then $\iota$ is $1$-Lipschitz and Hadamard directionally differentiable
at $f$. Its directional derivative in direction $g$ is 
\begin{equation}
\iota_{f}'(g)=\lim_{\varepsilon\downarrow0}\inf_{x\in S_{f}^{\varepsilon}}g(x).\label{eq:dir-derivative}
\end{equation}
\end{theorem}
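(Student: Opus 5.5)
The plan has three steps: the Lipschitz bound, a two-sided squeeze of difference quotients along a fixed direction, and an upgrade to Hadamard differentiability using the Lipschitz property.

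\textbf{Lipschitz bound and existence of the limit.} For $f,h\in\ell^{\infty}(X)$ and every $x$, we have $f(x)\le h(x)+\lVert f-h\rVert_\infty$. Taking infima gives $\iota(f)\le\iota(h)+\lVert f-h\rVert_\infty$, and symmetry yields $\lvert\iota(f)-\iota(h)\rvert\le\lVert f-h\rVert_\infty$. Next, $\varepsilon\mapsto S_f^{\varepsilon}$ is increasing, so $\varepsilon\mapsto\inf_{S_f^\varepsilon}g$ is nonincreasing in $\varepsilon$ and bounded by $\lVert g\rVert_\infty$. Hence the limit in \eqref{eq:dir-derivative} exists; call it $L$.

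\textbf{Gateaux step (fixed direction $g$, $t\downarrow0$).} I will squeeze $\bigl(\iota(f+tg)-\iota(f)\bigr)/t$ from both sides.

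\emph{Upper bound.} Fix $\varepsilon>0$ and $x\in S_f^{\varepsilon t}$. Then $\iota(f+tg)\le f(x)+tg(x)\le\iota(f)+\varepsilon t+tg(x)$. Taking the infimum over $x\in S_f^{\varepsilon t}\subseteq S_f^{\varepsilon}$ (for $t\le1$) is not quite right, because we need the infimum over a shrinking set. Instead, choose $x\in S_f^{\eta}$ with $\eta=\varepsilon t$, and note that $\inf_{S_f^{\varepsilon t}}g\le\inf_{S_f^{\delta}}g$ fails in the wrong direction. The clean route is this: for fixed $\delta>0$ and any $x\in S_f^{\delta}$, $\iota(f+tg)-\iota(f)\le\delta+tg(x)$. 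Letting $\delta=\delta(t)=t^2$ gives
\[
\frac{\iota(f+tg)-\iota(f)}{t}\le t+\inf_{S_f^{t^2}}g\;\to\;L .
\]
Here I use that $\inf_{S_f^{\delta}}g\to L$ as $\delta\downarrow0$, along any $\delta(t)\to0$.

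\emph{Lower bound.} Write $\iota(f+tg)=\inf_x\{f(x)+tg(x)\}$. Points with $f(x)>\iota(f)+\delta$ contribute at least $\iota(f)+\delta-t\lVert g\rVert_\infty$, which exceeds $\iota(f)+t\lVert g\rVert_\infty$ once $\delta>2t\lVert g\rVert_\infty$. Choosing $\delta=\delta(t)=3t\lVert g\rVert_\infty+t^2\to0$ (positive even if $g=0$), the infimum is therefore controlled by points of $S_f^{\delta(t)}$, where $f(x)+tg(x)\ge\iota(f)+t\inf_{S_f^{\delta(t)}}g$. This yields
\[
\frac{\iota(f+tg)-\iota(f)}{t}\ge\min\Bigl\{\inf_{S_f^{\delta(t)}}g,\;\lVert g\rVert_\infty\Bigr\}\to L ,
\]
since $L\le\lVert g\rVert_\infty$.

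\textbf{Hadamard upgrade.} For $g_t\to g$ in sup norm and $t\downarrow0$, the Lipschitz property gives
\[
\Bigl\lvert\bigl(\iota(f+tg_t)-\iota(f)\bigr)/t-\bigl(\iota(f+tg)-\iota(f)\bigr)/t\Bigr\rvert\le\lVert g_t-g\rVert_\infty\to0,
\]
so the Gateaux limit transfers to the Hadamard limit.

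The main obstacle is the lower bound. One must pick the shrinking tolerance $\delta(t)$ so that it both dominates the $O(t)$ perturbation and still tends to zero. The scaling $\delta\asymp t$ resolves this, and the monotone existence of $L$ makes any such vanishing choice suffice.
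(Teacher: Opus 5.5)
Your proof is correct. The paper does not prove Theorem~\ref{thm:infdir} at all; it imports it from \citet[Theorem~3.1]{Westerhout2024}, so your argument is a self-contained alternative rather than a reconstruction of an in-paper proof. Your steps are:
\begin{enumerate}[label=(\roman*),itemsep=0.15em]
\item the $1$-Lipschitz bound;
\item monotone existence of $L=\lim_{\varepsilon\downarrow0}\inf_{S_f^{\varepsilon}}g$;
\item a one-sided squeeze of the difference quotient using near-minimizer sets at tolerance $t^{2}$ for the upper bound, and $3t\lVert g\rVert_{\infty}+t^{2}$ for the lower bound, chosen so that points outside $S_f^{\delta(t)}$ cannot beat $\iota(f)+t\lVert g\rVert_{\infty}$;
\item the standard fact that a Lipschitz map which is G\^ateaux directionally differentiable is Hadamard directionally differentiable.
\end{enumerate}
Your route makes it transparent that no topology on $X$, no semicontinuity, and no attainment of the infimum are used. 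That is exactly the generality in which the paper applies the result, for instance in Theorem~\ref{thm:weakvalue}. The citation route only buys brevity.

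Some small points for a final write-up:
\begin{enumerate}[label=(\roman*),itemsep=0.15em]
\item Delete the false starts in the upper bound. Your first attempt was actually fine. Taking $x\in S_f^{\varepsilon t}$ gives $\bigl(\iota(f+tg)-\iota(f)\bigr)/t\le\varepsilon+\inf_{S_f^{\varepsilon t}}g$, whose $\limsup$ as $t\downarrow0$ is $\varepsilon+L$; then let $\varepsilon\downarrow0$. There is no ``wrong direction'' problem, because you only ever need $\inf_{S_f^{\delta}}g\to L$ along a vanishing tolerance.
\item Some standard definitions of Hadamard directional differentiability, for example those used to state the directional delta method in Theorem~\ref{thm:directional-delta}, require the derivative to be continuous. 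Add one line: each map $g\mapsto\inf_{S_f^{\varepsilon}}g$ is $1$-Lipschitz, so $\iota_f'$ is $1$-Lipschitz as their pointwise limit.
\item The minimum with $\lVert g\rVert_{\infty}$ in your lower bound is redundant, since $\inf_{S_f^{\delta}}g\le\lVert g\rVert_{\infty}$ whenever $S_f^{\delta}\neq\varnothing$.
\end{enumerate}
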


\begin{remark} Expression \eqref{eq:dir-derivative} is a
general formula without any topological assumptions on $X$. If $X$
is compact, $f$ is lower semicontinuous, and the limiting process
has lower semicontinuous sample paths, then \citet[Remark~4.3]{Westerhout2024}
show that the weak-limit expression simplifies to an infimum over
exact minimizers. \end{remark}

We shall also use the corresponding directionally differentiable delta
method in the Hoffmann--Jørgensen setting. The following is the form
of \citet[Fact~3.2]{Westerhout2024} needed below.

\begin{theorem}[Directional delta method]\label{thm:directional-delta}
Let $U,V$ be normed vector spaces, let $\mu\in U$, and let
$\phi:U\to V$ be Hadamard directionally differentiable at $\mu$.
For each $n\in\mathbb{N}$, let $Y_{n}:\Omega\to U$ be maps, and
let $Y:\Omega\to U$ be Borel measurable. If $\tau_{n}\to\infty$ and
\[
\tau_{n}(Y_{n}-\mu)\rightsquigarrow Y,
\]
then
\[
\tau_{n}(\phi(Y_{n})-\phi(\mu))\rightsquigarrow\phi_{\mu}'(Y)
\]
and
\[
\tau_{n}(\phi(Y_{n})-\phi(\mu))-\phi_{\mu}'(\tau_{n}(Y_{n}-\mu))=o_{\mathbb{P}^{*}}(1),
\]
where the latter display should be read as convergence to zero in outer probability
with respect to the norm on $V$.
\end{theorem}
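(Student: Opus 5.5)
Two claims need proof: the weak limit $\tau_n(\phi(Y_n)-\phi(\mu))\rightsquigarrow\phi_\mu'(Y)$, and the asymptotic linearization in outer probability. The plan is the standard almost-sure-representation-free route of \citet[Theorem~3.9.4]{vdVW2023}, adapted to directional rather than full Hadamard differentiability. Hadamard directional differentiability at $\mu$ means that for all $t_n\downarrow0$ and $h_n\to h$ in $U$ we have $(\phi(\mu+t_nh_n)-\phi(\mu))/t_n\to\phi_\mu'(h)$. This forces $\phi_\mu'$ to be continuous, since it is a limit taken uniformly along convergent sequences; positive homogeneity is not needed. The argument is then organised around the maps $g_n(h)=\tau_n(\phi(\mu+\tau_n^{-1}h)-\phi(\mu))$, defined on all of $U$. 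These satisfy $g_n(h_n)\to\phi_\mu'(h)$ whenever $h_n\to h$.

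\textbf{Step 1: the weak limit.} Apply the extended continuous mapping theorem \citet[Theorem~1.11.1]{vdVW2023} with $g_\infty=\phi_\mu'$. The hypothesis $g_n(h_n)\to g_\infty(h)$ holds for every $h_n\to h$ and every $h\in U$, so no separability of the limit is required. Since $\tau_n(Y_n-\mu)\rightsquigarrow Y$ with $Y$ Borel measurable, we obtain $g_n(\tau_n(Y_n-\mu))\rightsquigarrow\phi_\mu'(Y)$, which is exactly the first conclusion.

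\textbf{Step 2: the linearization.} Consider the joint maps $\tilde g_n(h)=g_n(h)-\phi_\mu'(h)$, taking values in $V$. By Step~1's convergence property together with continuity of $\phi_\mu'$, $\tilde g_n(h_n)\to0$ whenever $h_n\to h$. The extended continuous mapping theorem then gives $\tilde g_n(\tau_n(Y_n-\mu))\rightsquigarrow0$. Weak convergence to a constant is equivalent to convergence in outer probability \citet[Lemma~1.10.2]{vdVW2023}, which yields the second display.

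\textbf{Main obstacle.} The delicate point is verifying the hypotheses of the extended continuous mapping theorem in this general normed-space, possibly nonmeasurable setting. Two things must be checked. First, $\phi_\mu'$ must be continuous, so that $\phi_\mu'(Y)$ is a Borel measurable limit; this follows by a diagonal argument from the uniform-in-sequences definition. Second, the limit $Y$ should concentrate on a set where the convergence holds, and here that set is all of $U$, so no separability is needed. If the extended continuous mapping theorem version at hand requires a separable support for the limit, the fallback is to assume $Y$ is tight, which holds in the later applications to the empirical process on $\ell^\infty(X)$.
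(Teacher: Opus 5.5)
Your argument is correct. The paper gives no proof of its own and simply imports the result from \citet[Fact~3.2]{Westerhout2024}; your route is the standard extended-continuous-mapping proof of the delta method, adapted to directional derivatives, which is the usual argument behind such statements.

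The tightness fallback in your last paragraph is unnecessary. Since every $g_n$ is defined on all of $U$, the convergence $g_{n'}(h_{n'})\to\phi_\mu'(h)$ also holds along subsequences: set $h_m=h$ off the subsequence. With the hypothesis in this subsequence form, the closed-set portmanteau proof of the extended continuous mapping theorem uses only Borel measurability of $Y$ and continuity of $\phi_\mu'$, so the statement holds exactly as written.
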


When one wants to convert value perturbations into exact minimizer
localization, one needs a deterministic growth condition.

\begin{definition}[Sharp growth]\label{def:sharp} Assume now that
$X\subseteq\mathbb{R}^{d}$ and let $f\in\ell^{\infty}(X)$ be such
that $S_{f}^{0}\neq\varnothing$. For $\kappa\ge1$ we say that $f$
satisfies a sharp-growth inequality of order $\kappa$ if there exists
$\alpha>0$ such that 
\begin{equation}
f(x)-\iota(f)\ge\alpha\,\operatorname{dist}(x,S_{f}^{0})^{\kappa}\qquad\text{for all }x\in X,\label{eq:sharp-growth}
\end{equation}
where the distance is Euclidean. \end{definition}

\begin{remark} Inequality \eqref{eq:sharp-growth} is an error-bound-type
condition. When $\kappa=2$ it is a global version of the quadratic-growth
or strong-local-minimum conditions; see, for example, the discussion
of strong local minimizers and error bounds in \citet[Section~8.1]{CuiPang2022}.
\end{remark}

\subsection{VC-subgraph classes and empirical-process inputs}
\label{subsec:vc}

Let $\mathcal{H}$ be a class of real-valued functions on a measurable
space $(\Xi,\mathcal{G})$. The class is called a VC-subgraph class
if the collection of strict subgraphs 
\[
\bigl\{(\xi,t)\in\Xi\times\mathbb{R}:h(\xi)>t\bigr\},\qquad h\in\mathcal{H},
\]
has finite VC dimension. For $\{0,1\}$-valued classes this reduces
to the ordinary VC property of the associated concept class. This is
a set-theoretic complexity condition; the measurability hypotheses
needed for empirical-process arguments are stated separately below.

Throughout this subsection, $\xi_{1},\dots,\xi_{n}$ denotes an i.i.d.
$P$-sample and 
\[
\mathbb{P}_{n}=\frac{1}{n}\sum_{i=1}^{n}\delta_{\xi_{i}}
\]
denotes the empirical measure. Following \citet[Definition~2.3.3]{vdVW2023},
$\mathcal{H}$ is a $P$-measurable class if for every $n\in\mathbb{N}$
and every $e=(e_{1},\dots,e_{n})\in\mathbb{R}^{n}$ the map 
\[
(\xi_{1},\dots,\xi_{n})\longmapsto\sup_{h\in\mathcal{H}}\left|\sum_{i=1}^{n}e_{i}h(\xi_{i})\right|
\]
is measurable for the $P^{n}$-completion of $\mathcal{G}^{n}$. A class
$\mathcal{H}$ is pointwise measurable if there exists a countable subclass
$\mathcal{H}_{0}\subseteq\mathcal{H}$ such that for every $h\in\mathcal{H}$
there is a sequence $(h_{m})$ in $\mathcal{H}_{0}$ with $h_{m}(\xi)\to h(\xi)$
for every $\xi\in\Xi$; see \citet[Example~2.3.4]{vdVW2023}.
For parameter-indexed classes $\mathcal{H}=\{h_{x}:x\in X\}$, a convenient
sufficient condition is the existence of a countable subset $X_{0}\subseteq X$
such that every $x\in X$ admits a sequence $(x_{m})$ in $X_{0}$
with $h_{x_{m}}(\xi)\to h_{x}(\xi)$ for every $\xi\in\Xi$. We also write
\[
\mathcal{H}_{\infty}=\{h_{1}-h_{2}:h_{1},h_{2}\in\mathcal{H}\},\quad
\mathcal{H}_{\delta}=\{u\in\mathcal{H}_{\infty}:\lVert u\rVert_{P,2}<\delta\},\quad
\mathcal{H}_{\infty}^{2}=\{u^{2}:u\in\mathcal{H}_{\infty}\},
\]
where $\lVert u\rVert_{P,2}=(Pu^{2})^{1/2}$, with infinite values allowed.

\begin{assumption}[Empirical-process measurability]\label{ass:ep-meas}
We say that $\mathcal{H}$ satisfies the empirical-process measurability
hypotheses if the following hold:
\begin{enumerate}[label=(\roman*),itemsep=0.25em]
\item $\mathcal{H}$ is a $P$-measurable class.
\item $\mathcal{H}_{\delta}$ and $\mathcal{H}_{\infty}^{2}$ are $P$-measurable classes
for every $\delta>0$.
\item $\sup_{h\in\mathcal{H}}\lvert(\mathbb{P}_{n}-P)h\rvert$ is measurable
for every $n\in\mathbb{N}$.
\end{enumerate}
\end{assumption}

The empirical-process facts needed later are standard consequences
of entropy bounds for VC-subgraph classes. Recall that a class $\mathcal{H}$
is called $P$-Donsker if the empirical process 
\[
\mathbb{G}_{n}h=\sqrt{n}(\mathbb{P}_{n}-P)h,\qquad h\in\mathcal{H},
\]
viewed as a random element of $\ell^{\infty}(\mathcal{H})$, converges
weakly in the Hoffmann--Jørgensen sense to a tight centered Gaussian
process $\mathbb{G}_{P}$ on $\mathcal{H}$ with covariance 
\[
\operatorname{Cov}\bigl(\mathbb{G}_{P}h_{1},\mathbb{G}_{P}h_{2}\bigr)=P(h_{1}h_{2})-Ph_{1}\,Ph_{2}.
\]

\begin{theorem}[Donsker theorem for VC-subgraph classes]\label{thm:vc-donsker}
Let $\mathcal{H}$ be a VC-subgraph class of $\mathcal{G}$-measurable
functions. Suppose that $\mathcal{H}$ admits
a measurable envelope $H$, such that $\lvert h\rvert\le H$ for every
$h\in\mathcal{H}$, and that $PH^{2}<\infty$ for the probability measure
$P$ under consideration. Assume also that Assumption~\ref{ass:ep-meas}(i) and (ii)
hold. Then $\mathcal{H}$ is $P$-Donsker.
\end{theorem}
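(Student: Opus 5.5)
The plan is to deduce the statement from the uniform entropy Donsker theorem of \citet[Theorem~2.5.2]{vdVW2023}. That theorem says a class $\mathcal{H}$ with envelope $H$, $PH^{2}<\infty$, is $P$-Donsker provided two things hold. First, the uniform entropy integral must be finite:
\[
\int_{0}^{1}\sup_{Q}\sqrt{\log N\bigl(\varepsilon\lVert H\rVert_{Q,2},\mathcal{H},L_{2}(Q)\bigr)}\,d\varepsilon<\infty,
\]
where the supremum runs over finitely discrete probability measures $Q$ with $QH^{2}>0$. Second, the classes $\mathcal{H}_{\delta}$ and $\mathcal{H}_{\infty}^{2}$ must be $P$-measurable for every $\delta>0$. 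The second requirement is exactly Assumption~\ref{ass:ep-meas}(ii). So the real content of the proof is verifying the entropy condition from the VC-subgraph property. Assumption~\ref{ass:ep-meas}(i) is carried along because it supplies the measurability of the suprema used in the symmetrization step inside the cited theorem.

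\textbf{Entropy bound.} Let $V$ be the VC index of the subgraph collection, which is finite by hypothesis. I will invoke the uniform covering bound for VC-subgraph classes \citep[Theorem~2.6.7]{vdVW2023}: for some universal constant $K$, every $r\ge1$, every $0<\varepsilon<1$ and every probability measure $Q$ with $\lVert H\rVert_{Q,r}>0$,
\[
N\bigl(\varepsilon\lVert H\rVert_{Q,r},\mathcal{H},L_{r}(Q)\bigr)\le KV(16e)^{V}\Bigl(\frac{1}{\varepsilon}\Bigr)^{r(V-1)}.
\]
Taking $r=2$, the square root of the logarithm is of order $\sqrt{\log(1/\varepsilon)}$ as $\varepsilon\to0$, and this is integrable on $(0,1]$. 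The bound is uniform in $Q$, so the uniform entropy integral is finite.

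Conceptually, the main obstacle is this covering bound itself. Its proof reduces $L_{1}(Q)$-covering of $\mathcal{H}$ to covering the subgraph sets under the product measure $Q\times\mathrm{Unif}$, normalized by the envelope, and then applies Haussler/Dudley-type packing bounds for VC classes. Since the result is standard, I will cite it rather than reprove it.

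\textbf{Measurability bookkeeping.} Here I need to make sure that the hypotheses of the cited Donsker theorem are precisely Assumption~\ref{ass:ep-meas}(i) and (ii), together with $PH^{2}<\infty$ and measurability of each $h$ and of $H$. With those in place, the conclusion follows: $\mathbb{G}_{n}\rightsquigarrow\mathbb{G}_{P}$ in $\ell^{\infty}(\mathcal{H})$ to a tight Gaussian limit with the stated covariance. The covariance identification is automatic, because the finite-dimensional marginals converge by the multivariate CLT, which applies since $Ph^{2}\le PH^{2}<\infty$.
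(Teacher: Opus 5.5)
Your proposal is correct and follows the paper's route: the paper likewise obtains the result by feeding the VC covering bound \citep[Theorem~2.6.7]{vdVW2023} into the uniform-entropy Donsker theorem \citep[Theorem~2.5.2]{vdVW2023}, with Assumption~\ref{ass:ep-meas}(ii) supplying exactly the $P$-measurability of $\mathcal{H}_{\delta}$ and $\mathcal{H}_{\infty}^{2}$ that theorem requires. One small quibble: Theorem~2.5.2 does not use Assumption~\ref{ass:ep-meas}(i), since its symmetrization step is applied to $\mathcal{H}_{\delta}$ rather than $\mathcal{H}$. Your stated reason for carrying (i) along is therefore inaccurate, but harmless.
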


\begin{remark}\label{rem:ptwise-meas} This is a standard consequence of the VC covering
theorem \citep[Theorem~2.6.7]{vdVW2023} together with the uniform-entropy
Donsker theorem and its VC-subgraph corollaries \citep[Theorems~2.5.2 and~2.6.8]{vdVW2023}.
Under the square-integrable envelope assumptions imposed below, a
convenient sufficient condition for Assumption~\ref{ass:ep-meas}
is pointwise measurability. Indeed, \citet[Example~2.3.4]{vdVW2023}
gives that $\mathcal{H}$ is a $P$-measurable class. If $\mathcal{H}_{0}$ is a
countable pointwise dense subclass, then differences of pairs from
$\mathcal{H}_{0}$ pointwise approximate $\mathcal{H}_{\infty}$, and
dominated convergence under the envelope $2H$ preserves the strict
$L_{2}(P)$ restriction defining $\mathcal{H}_{\delta}$ after passing
to a subsequence. The squared-difference class is handled similarly
under the envelope $4H^{2}$. Finally, $\sup_{h\in\mathcal{H}}\lvert(\mathbb{P}_{n}-P)h\rvert$
is the supremum over the corresponding countable subclass and hence
measurable.
\end{remark}

\begin{remark}\label{rem:pullback} In our intended applications
the class is indexed by parameters, $\mathcal{H}=\{h_{x}:x\in X\}$.
If $\Pi:X\to\mathcal{H}$ is defined by $\Pi(x)=h_{x}$, then the
pullback operator 
\[
T:\ell^{\infty}(\mathcal{H})\to\ell^{\infty}(X),\qquad(Tu)(x)=u\bigl(\Pi(x)\bigr),
\]
is linear and continuous, with $\lVert Tu\rVert_{\infty}=\lVert u\rVert_{\infty}$.
Suppose now that $\mathbb{G}_{n}\rightsquigarrow\mathbb{G}$ in $\ell^{\infty}(\mathcal{H})$.
Since $T$ is continuous, the continuous mapping theorem for Hoffmann--Jørgensen
weak convergence gives 
\[
T\mathbb{G}_{n}\rightsquigarrow T\mathbb{G}\qquad\text{in }\ell^{\infty}(X).
\]
This is exactly the desired pulled-back convergence, because for every
$x\in X$, 
\[
(T\mathbb{G}_{n})(x)=\mathbb{G}_{n}(\Pi(x))=\mathbb{G}_{n}(h_{x})=\sqrt{n}(\mathbb{P}_{n}-P)h_{x},
\]
and similarly $(T\mathbb{G})(x)=\mathbb{G}(h_{x})$. Thus Donsker
convergence on $\mathcal{H}$ transfers immediately to weak convergence
of the empirical objective process indexed by $X$. \end{remark}

For almost-sure rates we use the LIL of \citet{Yukich1990}
through a VC verification corollary. The underlying weak-entropy criterion
is recorded in Theorem~\ref{thm:yukich} in Appendix~\ref{app:proofs}.
Here we specialize to the VC-subgraph implication that will be used
repeatedly in the examples.

\begin{proposition}[Almost-sure order for VC-subgraph classes]\label{prop:vc-yukich}
Let $\mathcal{H}$ be a VC-subgraph class of $\mathcal{G}$-measurable
functions with measurable envelope $H\in L_{2}(P)$.
Assume also that Assumption~\ref{ass:ep-meas}(iii) holds. Then
\begin{equation}
\sup_{h\in\mathcal{H}}\lvert(\mathbb{P}_{n}-P)h\rvert=O\!\left(\sqrt{\frac{\log\log n}{n}}\right)\qquad\text{almost surely}.\label{eq:vc-blil-rate}
\end{equation}
\end{proposition}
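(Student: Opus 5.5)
We plan to deduce Proposition~\ref{prop:vc-yukich} from the weak-entropy LIL criterion of \citet{Yukich1990} recorded as Theorem~\ref{thm:yukich} in Appendix~\ref{app:proofs}. Its hypotheses are: a measurable envelope in $L_{2}(P)$; measurability of $\sup_{h\in\mathcal{H}}\lvert(\mathbb{P}_{n}-P)h\rvert$; and an entropy integrability condition on $L_{2}$ covering numbers. Its conclusion is a bounded LIL, $\limsup_{n}\sqrt{n/\mathrm{LL}(n)}\,\sup_{h}\lvert(\mathbb{P}_{n}-P)h\rvert<\infty$ almost surely. The first two hypotheses are assumed directly: $H\in L_{2}(P)$, and Assumption~\ref{ass:ep-meas}(iii) gives measurability of the supremum. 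Because of the latter, the almost-sure statement is an ordinary (not outer) one, and \eqref{eq:vc-blil-rate} follows at once. The work therefore reduces to checking the entropy condition.

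For the entropy condition we use the VC covering theorem \citep[Theorem~2.6.7]{vdVW2023}. Let $V$ be the VC index of the subgraphs. Then for every finitely discrete probability measure $Q$ with $\lVert H\rVert_{Q,2}>0$ and every $0<\varepsilon<1$,
\[
N\bigl(\varepsilon\lVert H\rVert_{Q,2},\mathcal{H},L_{2}(Q)\bigr)\le K V(16e)^{V}\Bigl(\frac{1}{\varepsilon}\Bigr)^{2(V-1)},
\]
with universal $K$. This is a uniform polynomial bound. Consequently $\sup_{Q}\log N(\varepsilon\lVert H\rVert_{Q,2},\mathcal{H},L_{2}(Q))\lesssim V\log(1/\varepsilon)$, which is integrable after taking square roots near $0$. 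The uniform entropy integral $J(1,\mathcal{H},H)$ is therefore finite.

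If Theorem~\ref{thm:yukich} is phrased with $L_{2}(P)$ bracketing or random entropy rather than uniform entropy, we pass through the empirical measure. With $Q=\mathbb{P}_{n}$ (finitely discrete), the bound controls $N(\varepsilon\lVert H\rVert_{\mathbb{P}_{n},2},\mathcal{H},L_{2}(\mathbb{P}_{n}))$ for every sample. The strong law gives $\lVert H\rVert_{\mathbb{P}_{n},2}\to\lVert H\rVert_{P,2}$ almost surely, so the random entropy is almost surely $O(\log(1/\varepsilon))$ uniformly in $n$, at scales proportional to $\lVert H\rVert_{P,2}$. The degenerate case $\lVert H\rVert_{P,2}=0$ is trivial, since then every $h$ vanishes $P$-a.s. and the supremum is zero almost surely.

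The main obstacle is matching the precise form of Yukich's entropy hypothesis, which may be a condition like $\int_{0}^{1}\sqrt{\log N(\varepsilon)}\,d\varepsilon<\infty$ together with a growth condition on $\log N(\varepsilon)$ as $\varepsilon\downarrow0$. We must check that the polynomial VC bound fulfils it, including any truncation of the envelope used in the LIL proof. If truncation at level $\sqrt{n/\mathrm{LL}(n)}$ is needed, we would use $PH^{2}<\infty$ and Borel--Cantelli to show the truncated-away part is negligible at the LIL scale. We would also note that truncated classes $\{h\mathbf{1}\{H\le c\}\}$ remain VC-subgraph with envelope $H$, so the same entropy bound applies.
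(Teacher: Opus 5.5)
Your proposal follows the same route as the paper: the VC covering theorem gives a uniform polynomial bound on envelope-normalized $L_{2}(Q)$ covering numbers over finitely supported $Q$, which makes Yukich's entropy integral finite, and Theorem~\ref{thm:yukich} together with the measurability from Assumption~\ref{ass:ep-meas}(iii) then yields \eqref{eq:vc-blil-rate}. The paper also handles two small points you leave implicit: Yukich's covering numbers require centers in $\mathcal{H}$ (obtained by passing to a half-radius cover), and the degenerate case $\lVert H\rVert_{Q_{S},2}=0$ must be disposed of. Your hedging about random entropy and truncation is unnecessary, since $\mathrm{LL}\ge1$ means the square root of the uniform entropy already dominates the integrand in \eqref{eq:yukich-integral}.
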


\begin{remark} This result complements the Banach-space LILs used
by \citet{Banholzer2022} and extends their almost-sure theory to square-integrably
enveloped definable classes. \end{remark}

\subsection{O-minimal structures, restricted analytic functions, and Pfaffian
functions}

An o-minimal structure $\mathcal{R}$ on the ordered field $(\mathbb{R},+,\cdot,<)$
assigns to each $n\in\mathbb{N}$ a collection of subsets of $\mathbb{R}^{n}$,
called the definable sets of $\mathcal{R}$, such that the family
is closed under finite Boolean operations, Cartesian products, and
coordinate projections, contains algebraic sets, and has the one-dimensional
tameness property that every definable subset of $\mathbb{R}$ is
a finite union of points and open intervals. A function is definable
if its graph is definable \citep[Section~1]{LeLoi2010}.

The intuitive content of definability is that sets and functions are
built from a fixed dictionary of primitive relations and functions
using finitely many logical operations and projections. In one variable
this rules out arbitrarily complicated oscillation, accumulation of
isolated components, or fractal-type behavior; this is the tame topology.
In higher dimensions it yields a robust geometric calculus: images,
inverse images, closures, boundaries, and compositions of definable
objects remain definable \citep[Section~2]{LeLoi2010}.

A basic special case is the semialgebraic o-minimal structure, namely
the pure ordered-field structure $(\mathbb{R},+,\cdot,<)$. A subset
of $\mathbb{R}^{n}$ is called semialgebraic if it is obtained from
finitely many polynomial equalities and strict or weak polynomial
inequalities by finitely many Boolean operations. By the Tarski--Seidenberg
theorem, coordinate projections of semialgebraic sets are again semialgebraic;
hence semialgebraic sets are exactly the definable sets of this basic
structure \citep[Chapter~2]{vdDries1998}. Accordingly, semialgebraic
functions are precisely those whose graphs are semialgebraic. When
we later say that a set or function is definable in the semialgebraic
structure, we simply mean definable in this basic ordered-field structure.
This is the natural tame setting for models assembled from finitely
many arithmetic operations and polynomial inequality tests.

Two o-minimal structures are particularly useful. First, a
restricted analytic function on $\mathbb{R}^{m}$ is a function $g$
for which there exists a real-analytic function $\tilde{g}$ on a
neighborhood of $[-1,1]^{m}$ such that $g(x)=\tilde{g}(x)$ for $x\in[-1,1]^{m}$
and $g(x)=0$ outside $[-1,1]^{m}$. The o-minimal structure generated
by all restricted analytic functions is denoted $\mathbb{R}_{\mathrm{an}}$,
and adjoining the global exponential map yields the o-minimal structure
$\mathbb{R}_{\mathrm{an},\exp}$ \citep[Examples~1.5--1.7]{LeLoi2010}.

Second, let $U\subseteq\mathbb{R}^{m}$ be open. A finite family of
real-analytic functions $f_{1},\dots,f_{q}:U\to\mathbb{R}$ is called a Pfaffian
chain on $U$ if for each $i=1,\dots,q$ and each coordinate $j=1,\dots,m$
there exists a polynomial $P_{ij}$ such that 
\[
\frac{\partial f_{i}}{\partial x_{j}}(x)=P_{ij}\bigl(x,f_{1}(x),\dots,f_{i}(x)\bigr)\qquad\text{for all }x\in U.
\]
A Pfaffian function associated with this chain is any function of
the form 
\[
x\longmapsto Q\bigl(x,f_{1}(x),\dots,f_{q}(x)\bigr)
\]
with $Q$ polynomial. The Pfaffian structure described in
\citet[Example~1.8]{LeLoi2010} is generated by Pfaffian functions whose
chains are defined on all of $\mathbb{R}^{m}$, and is often denoted
$\mathbb{R}_{\mathrm{Pfaff}}$. More
general Pfaffian functions on open domains, as in
\citet[Section~3]{MontanaPardo2009}, are the objects used in the
Pfaffian complexity bounds below.

The relevance of these structures is that they remain stable under
the threshold and equality predicates that generate discontinuous
bounded losses. For instance, if $g(x,\xi)$ is definable, then so
are the indicator losses $\mathbf{1}_{\{g(x,\xi)>0\}}$, $\mathbf{1}_{\{g(x,\xi)\ge0\}}$,
and $\mathbf{1}_{\{g(x,\xi)=0\}}$, even though they are typically
discontinuous in $x$.

\begin{theorem}[Finite VC complexity of definable subgraph families]\label{thm:def-vc}
Let $\Theta\subseteq\mathbb{R}^{d}$ and $\Xi\subseteq\mathbb{R}^{m}$
be definable sets in an o-minimal structure on $(\mathbb{R},+,\cdot,<)$,
and let $h:\Theta\times\Xi\to\mathbb{R}$. Assume that the set 
\[
\{(\theta,\xi,t)\in\Theta\times\Xi\times\mathbb{R}:h(\theta,\xi)>t\}
\]
is definable. Equivalently, since definable sets are closed under
complement, assume that the total subgraph 
\[
\operatorname{Sub}(h)=\{(\theta,\xi,t)\in\Theta\times\Xi\times\mathbb{R}:h(\theta,\xi)\le t\}
\]
is definable. Then the family of strict subgraphs 
\[
\bigl\{(\xi,t)\in\Xi\times\mathbb{R}:h(\theta,\xi)>t\bigr\}_{\theta\in\Theta}
\]
has finite VC dimension. Equivalently, the class $\{h_{\theta}=h(\theta,\cdot):\theta\in\Theta\}$
is a VC-subgraph class. \end{theorem}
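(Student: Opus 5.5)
The plan is to reduce the statement to the standard fact that definable families in an o-minimal structure have finite VC dimension, as in \citet[Chapter~5]{vdDries1998}. First I would set up the relevant definable family. Put $Y=\Xi\times\mathbb{R}\subseteq\mathbb{R}^{m+1}$, which is definable because $\Xi$ is definable and definable sets are closed under Cartesian products. Then let
\[
A=\{(\theta,\xi,t)\in\Theta\times\Xi\times\mathbb{R}:h(\theta,\xi)>t\}\subseteq\mathbb{R}^{d}\times\mathbb{R}^{m+1},
\]
which is definable by hypothesis. The equivalence with definability of $\operatorname{Sub}(h)$ follows by taking the complement relative to the definable set $\Theta\times\Xi\times\mathbb{R}$. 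For each $\theta\in\Theta$, the fibre $A_{\theta}=\{(\xi,t):(\theta,\xi,t)\in A\}$ is exactly the strict subgraph of $h_{\theta}$. So the problem becomes: a definable family $\{A_{\theta}\}_{\theta\in\Theta}$ of subsets of $\mathbb{R}^{m+1}$ has finite VC dimension.

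Next I would invoke the o-minimal VC theorem. For a definable $A\subseteq\mathbb{R}^{d}\times\mathbb{R}^{k}$, the family of fibres $\{A_{\theta}\}$ has finite VC dimension; this is \citet[Chapter~5, Theorem~2.4 and its corollaries]{vdDries1998}, also due to Laskowski via the NIP property of o-minimal theories. If only the parameter-count version is available, I would use it directly. In that version, uniform finiteness and cell decomposition show that for any finite $F\subseteq\mathbb{R}^{k}$ with $|F|=N$, the number of traces $\{A_{\theta}\cap F\}$ is at most $CN^{d}$, with $C$ depending only on $A$. Polynomial growth then rules out shattering of large sets, since $2^{N}>CN^{d}$ for $N$ large. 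Once this holds, the VC dimension is finite, and by definition $\{h_{\theta}\}$ is a VC-subgraph class.

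The main obstacle is this counting bound. If I had to prove it rather than cite it, I would treat it as follows. Fix $F=\{y_{1},\dots,y_{N}\}$ and consider the definable sets $B_{i}=\{\theta:y_{i}\in A_{\theta}\}\subseteq\mathbb{R}^{d}$. The traces correspond to the nonempty atoms of the Boolean algebra generated by $B_{1},\dots,B_{N}$. Bounding the number of these atoms by $O(N^{d})$ uniformly needs a uniform cell decomposition for the family $\{B_{y}\}_{y\in\mathbb{R}^{k}}$, together with an induction on $d$. That is the technical heart of the argument, so I would cite \citet{vdDries1998} for it rather than reprove it.
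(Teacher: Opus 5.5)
Your proposal is correct and takes the same approach as the paper. The paper's entire justification is a direct application of the VC theory of definable families in \citet[Chapter~5, Section~2]{vdDries1998} to the strict subgraphs, viewed as the fibres of a definable subset of $\Theta\times(\Xi\times\mathbb{R})$, which is exactly your reduction; your extra sketch of the polynomial trace-counting bound is not needed, since the paper simply cites the theorem.
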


\begin{remark} This is an immediate application of the VC theory
of definable families developed in Chapter~5, Section~2 of \citet{vdDries1998}
to the definable family of strict subgraphs in the ambient space $\Xi\times\mathbb{R}$.
\end{remark}

\subsection{Quantitative VC dimension bounds}

For several model classes within appropriate o-minimal structures
one has more than qualitative finiteness of VC complexity. The following remarks record quantitative bounds that complement
the discussion above.

\begin{remark}[Arithmetic and exponential program bounds]\label{rem:AB-programs}
Let $g:\mathbb{R}^{m}\times\mathbb{R}^{n}\to\mathbb{R}$. Assume that
for each triple $(\theta,u,s)\in\mathbb{R}^{m}\times\mathbb{R}^{n}\times\mathbb{R}$
the predicate $g_{\theta}(u)>s$ can be decided, in the exact computational
model of \citet[Theorems~8.4 and~8.14]{AnthonyBartlett2009}, by a
decision procedure that takes $(\theta,u,s)$ as input and returns
$\mathbf{1}_{\{g_{\theta}(u)>s\}}$. 
\begin{enumerate}[label=(\roman*),itemsep=0.25em]
\item Suppose that $m\ge1$, $t\ge1$, and this decision procedure uses
at most $t$ steps consisting of the arithmetic operations $+,-,\times,/$
(with division only where defined), conditional jumps based on the
comparisons $>,\ge,<,\le,=,\neq$ of real numbers, and the output
instructions $0$ and $1$. Then the strict subgraph family 
\[
\bigl\{(u,s)\in\mathbb{R}^{n}\times\mathbb{R}:g_{\theta}(u)>s\bigr\}_{\theta\in\mathbb{R}^{m}}
\]
has VC dimension 
\begin{equation}
\operatorname{VCdim}\bigl(\{(u,s):g_{\theta}(u)>s\}_{\theta\in\mathbb{R}^{m}}\bigr)\le4m(t+2).\label{eq:AB-arith}
\end{equation}
\item Suppose that $m\ge1$ and the same computational model is augmented
by the exponential map. Then 
\begin{equation}
\operatorname{VCdim}\bigl(\{(u,s):g_{\theta}(u)>s\}_{\theta\in\mathbb{R}^{m}}\bigr)\le t^{2}m\bigl(m+19\log_{2}(9m)\bigr).\label{eq:AB-exp}
\end{equation}
If, more precisely, the exponential function is evaluated at most
$q$ times, then 
\begin{equation}
\operatorname{VCdim}\bigl(\{(u,s):g_{\theta}(u)>s\}_{\theta\in\mathbb{R}^{m}}\bigr)\le m^{2}(q+1)^{2}+11m(q+1)\bigl(t+\log_{2}(9m(q+1))\bigr).\label{eq:AB-exp-q}
\end{equation}
\end{enumerate}
\end{remark}

\begin{remark}[Pfaffian program bounds]\label{rem:MP}
Let $\{g_{\theta}:\theta\in\Theta\}$ be a family of real-valued
functions with parameter dimension $k$. Assume that for each triple
$(\theta,u,s)$ the predicate $g_{\theta}(u)>s$ can be decided, in
the exact Pfaffian sequential-program model of \citet[Theorem~4]{MontanaPardo2009},
by a program of running time $t\ge1$ using the arithmetic operations
$+,-,\times,/$ (with division only where defined), conditional jumps
based on equality and inequality tests of real numbers, at most $q$
Pfaffian evaluations, a fixed Pfaffian chain of length $\ell$, and
degree bound $D\ge2$. Let $q_{0}=\max\{q,\ell,1\}$. Then the
threshold family 
\[
\bigl\{(u,s):g_{\theta}(u)>s\bigr\}_{\theta\in\Theta}
\]
has VC dimension bounded, for a universal constant $C$, by 
\begin{equation}
\operatorname{VCdim}\bigl(\{(u,s):g_{\theta}(u)>s\}_{\theta\in\Theta}\bigr)\le C\left((q_{0}k)^{2}+k(q_{0}+t)\log_{2}((k+1)t)\log_{2}D\right).\label{eq:MP-bound}
\end{equation}
This is the sequential Pfaffian-program bound of \citet[Theorem~4]{MontanaPardo2009},
with $q_{0}$ used to dominate both the number of Pfaffian evaluations
and the chain length. \end{remark}

\section{Problem formulation}

\label{sec:setup}

Retain $(\Xi,\mathcal{G})$, the i.i.d. sample $\xi,\xi_{1},\xi_{2},\dots$,
its law $P$, and the empirical measure $\mathbb{P}_{n}$ from Subsection~\ref{subsec:vc}.
Let $X$ be a nonempty index set. In the abstract infimum theory of
\citet{Westerhout2024}, no topology on $X$ is needed. For solution-set
localization and for the definable examples, we will take
$X\subseteq\mathbb{R}^{d}$ and $\Xi\subseteq\mathbb{R}^{m}$; depending
on the application, $X$ may be the whole space $\mathbb{R}^{d}$,
a box such as $[-M,M]^{d}$, or any other definable subset of $\mathbb{R}^{d}$.

We consider a loss function 
\begin{equation}
h:X\times\Xi\to\mathbb{R},\label{eq:loss}
\end{equation}
such that each section $h_{x}(\cdot)=h(x,\cdot)$ is measurable and
integrable under $P$, and define the expected and empirical objectives
by 
\begin{equation}
f(x)=\mathbb{E}[h(x,\xi)]=Ph_{x},\qquad\hat{f}_{n}(x)=\frac{1}{n}\sum_{i=1}^{n}h(x,\xi_{i})=\mathbb{P}_{n}h_{x}.\label{eq:obj-defs}
\end{equation}
Throughout the main results we work in the bounded-objective regime
in which $f\in\ell^{\infty}(X)$ and the centered empirical objective
$\hat{f}_{n}-f$ belongs to $\ell^{\infty}(X)$ almost surely for
the relevant sample sizes. This is automatic for bounded loss classes
and, more generally, whenever the class $\mathcal{H}$ admits a measurable
envelope $H$ with $PH<\infty$, since then $|f(x)|\le PH$ and $|\hat{f}_{n}(x)-f(x)|\le\mathbb{P}_{n}H+PH$
almost surely. The associated optimal values are 
\begin{equation}
\psi^{*}=\inf_{x\in X}f(x),\qquad\hat{\psi}_{n}=\inf_{x\in X}\hat{f}_{n}(x).\label{eq:value-defs}
\end{equation}
For $\varepsilon,\delta\ge0$ define the expected and empirical $\varepsilon$-minimizer
sets 
\begin{equation}
S^{\varepsilon}=\{x\in X:f(x)\le\psi^{*}+\varepsilon\},\qquad\hat{S}_{n}^{\delta}=\{x\in X:\hat{f}_{n}(x)\le\hat{\psi}_{n}+\delta\}.\label{eq:near-minimizers}
\end{equation}
The set $\hat{S}_{n}^{\delta}$ is always nonempty when $\delta>0$,
and it is the natural empirical solution concept when exact minima
are not attained. The following convention records the empirical
minimizer membership assumption used in the transfer results.

\begin{assumption}[Empirical minimizer membership]\label{ass:emp-min-membership}
Let $(\delta_{n})$ be a nonnegative deterministic sequence and let
$\hat{x}_{n}:\Omega\to X$ be maps. We say that the empirical minimizer
membership assumption holds if there are events $A_{\mathrm{min},N}\in\mathcal{F}$,
$N\in\mathbb{N}$, with $A_{\mathrm{min},N}\subseteq A_{\mathrm{min},N+1}$ and
\[
\mathbb{P}\left(A_\mathrm{min}\right)=1,\quad A_\mathrm{min}=\bigcup_{N=1}^{\infty}A_{\mathrm{min},N},
\]
such that
\[
\hat{x}_{n}(\omega)\in\hat{S}_{n}^{\delta_{n}}(\omega)\qquad\text{for every }N\in\mathbb{N},\ \omega\in A_{\mathrm{min},N},\text{ and }n\ge N.
\]
\end{assumption}

\begin{remark}
Assumption \ref{ass:emp-min-membership} implies the following pathwise statement. On a full-probability event
$A_{\mathrm{min}}$, every sample path has a finite, path-dependent index
$N(\omega)$ after which the empirical minimizer membership condition $\hat{x}_{n}(\omega)\in\hat{S}_{n}^{\delta_{n}}(\omega)$ holds. Conversely, this pathwise formulation is equivalent to Assumption \ref{ass:emp-min-membership} when the corresponding finite-index sets can be chosen as measurable increasing events whose union has probability one.
\end{remark}

In the bounded-objective regime the uniform objective fluctuation
is 
\begin{equation}
\Delta_{n}=\lVert\hat{f}_{n}-f\rVert_{\infty}=\sup_{x\in X}\bigl|(\mathbb{P}_{n}-P)h_{x}\bigr|,\label{eq:Delta-def}
\end{equation}
where $h_{x}(\xi)=h(x,\xi)$ and $\mathbb{P}_{n}$ is the empirical
measure of $\xi_{1},\dots,\xi_{n}$. The organizing principle of the
paper is that once one controls $\Delta_{n}$, every value and $\varepsilon$-minimizer
rate follows by deterministic arguments.

For the empirical-process results we will use the class 
\[
\mathcal{H}=\{h_{x}=h(x,\cdot):x\in X\}.
\]
When the class is viewed as a subset of $\mathcal{G}$-measurable functions,
Assumption~\ref{ass:ep-meas}
will be stated explicitly whenever a standard empirical-process theorem
requires it. The abstract perturbation results of the next section
do not need this assumption.

\section{Main results}

\label{sec:main}

\subsection{Deterministic transfer from objective perturbations}

The first theorem is purely deterministic and is the mechanism by
which all probabilistic convergence statements are transferred from
$\Delta_{n}$ to optimal values and $\varepsilon$-minimizers.

\begin{theorem}[Bound transfers]\label{thm:deterministic} Let $f,\hat{f}_{n}\in\ell^{\infty}(X)$
be arbitrary. Define $\psi^{*}$, $\hat{\psi}_{n}$, $S^{\varepsilon}$,
and $\hat{S}_{n}^{\delta}$ as in \eqref{eq:value-defs}--\eqref{eq:near-minimizers},
and set
\[
\Delta_{n}=\lVert\hat{f}_{n}-f\rVert_{\infty}.
\]
Then for every $n$ and every $\varepsilon,\delta\ge0$ the following
statements hold. 
\begin{enumerate}[label=(\alph*),itemsep=0.25em]
\item \textbf{Value perturbation:} 
\begin{equation}
\lvert\hat{\psi}_{n}-\psi^{*}\rvert\le\Delta_{n}.\label{eq:value-perturbation}
\end{equation}
\item \textbf{$\varepsilon$-minimizer transfer:} 
\begin{equation}
\hat{S}_{n}^{\delta}\subseteq S^{2\Delta_{n}+\delta},\qquad S^{\varepsilon}\subseteq\hat{S}_{n}^{2\Delta_{n}+\varepsilon}.\label{eq:near-transfer}
\end{equation}
\item \textbf{Excess-risk bound:} if $\hat{x}_{n}\in\hat{S}_{n}^{\delta}$,
then 
\begin{equation}
f(\hat{x}_{n})-\psi^{*}\le2\Delta_{n}+\delta.\label{eq:excess-risk-bound}
\end{equation}
\item \textbf{Solution-set localization under sharp growth:} if $X\subseteq\mathbb{R}^{d}$,
$S^{0}\neq\varnothing$, and there exists $\alpha>0$ such that
\eqref{eq:sharp-growth} holds with exponent $\kappa$, then every
$\hat{x}_{n}\in\hat{S}_{n}^{\delta}$ satisfies
\begin{equation}
\operatorname{dist}(\hat{x}_{n},S^{0})^{\kappa}\le\alpha^{-1}(2\Delta_{n}+\delta).\label{eq:distance-bound}
\end{equation}
\end{enumerate}
\end{theorem}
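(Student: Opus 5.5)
The plan is to derive everything from the pointwise sandwich
\[
f(x)-\Delta_{n}\le\hat{f}_{n}(x)\le f(x)+\Delta_{n},\qquad x\in X,
\]
which is immediate from the definition of $\Delta_{n}$ as a supremum. No topology, attainment, or measurability is involved; all four parts are deterministic consequences.

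For (a), I would take infima over $x\in X$ in both inequalities of the sandwich. This gives $\psi^{*}-\Delta_{n}\le\hat{\psi}_{n}\le\psi^{*}+\Delta_{n}$. Equivalently, one can invoke the $1$-Lipschitz property of $\iota$ from Theorem~\ref{thm:infdir}, since $\hat{\psi}_{n}-\psi^{*}=\iota(\hat{f}_{n})-\iota(f)$.

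For (b), take $x\in\hat{S}_{n}^{\delta}$ and chain three bounds:
\[
f(x)\le\hat{f}_{n}(x)+\Delta_{n}\le\hat{\psi}_{n}+\delta+\Delta_{n}\le\psi^{*}+2\Delta_{n}+\delta.
\]
The first uses the sandwich, the second the membership $x\in\hat{S}_{n}^{\delta}$, and the third part (a). Hence $x\in S^{2\Delta_{n}+\delta}$. The reverse inclusion is symmetric: for $x\in S^{\varepsilon}$ we have
\[
\hat{f}_{n}(x)\le f(x)+\Delta_{n}\le\psi^{*}+\varepsilon+\Delta_{n}\le\hat{\psi}_{n}+2\Delta_{n}+\varepsilon,
\]
where the last step uses $\psi^{*}\le\hat{\psi}_{n}+\Delta_{n}$ from (a).

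Part (c) is simply a restatement of the first inclusion in (b), applied to the point $\hat{x}_{n}$.

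For (d), I would combine (c) with the sharp-growth inequality \eqref{eq:sharp-growth} applied to $f$. This is legitimate because $S^{0}=S_{f}^{0}\neq\varnothing$ and $\iota(f)=\psi^{*}$. It gives
\[
\alpha\operatorname{dist}(\hat{x}_{n},S^{0})^{\kappa}\le f(\hat{x}_{n})-\psi^{*}\le2\Delta_{n}+\delta,
\]
and dividing by $\alpha>0$ yields \eqref{eq:distance-bound}.

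There is no substantive obstacle here. The only point needing care is that every quantity is finite: $f,\hat{f}_{n}\in\ell^{\infty}(X)$ ensures that $\psi^{*}$, $\hat{\psi}_{n}$ and $\Delta_{n}$ are real numbers, so the algebraic manipulations are valid. One should also note that for $\delta=0$ the set $\hat{S}_{n}^{0}$ may be empty, in which case (b)--(d) hold vacuously.
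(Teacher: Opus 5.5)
Your proposal is correct and follows essentially the same route as the paper. The pointwise sandwich gives (a) by taking infima, the three-step chain together with (a) gives (b), (c) is read off from (b), and (d) combines (c) with sharp growth; writing out the symmetric inclusion explicitly and noting the $1$-Lipschitz alternative via Theorem~\ref{thm:infdir} are harmless additions.
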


\begin{remark} The theorem remains meaningful even when exact minimizers
do not exist. For example, on $X=[0,1]$ the bounded semialgebraic
function 
\[
f(x)=x+\mathbf{1}_{\{x=0\}}
\]
has infimum $0$ but no minimizer, since $f(x)>0$ for every $x\in[0,1]$.
In this case the value perturbation \eqref{eq:value-perturbation}
and the excess-risk bound \eqref{eq:excess-risk-bound} are still
valid and continue to control empirical $\varepsilon$-minimizers,
even though there is no exact minimizer set around which one could
localize. \end{remark}

\begin{corollary}[Stochastic transfers]\label{cor:generic-mode}
Let $(r_{n})$ be a positive deterministic sequence with $r_{n}\downarrow0$. 
\begin{enumerate}[label=(\roman*),itemsep=0.25em]
\item If $\Delta_{n}=O_{\mathbb{P}^{*}}(r_{n})$, then 
\[
\hat{\psi}_{n}-\psi^{*}=O_{\mathbb{P}^{*}}(r_{n}).
\]
Moreover, for every empirical $\delta_{n}$-minimizer sequence satisfying
Assumption~\ref{ass:emp-min-membership}, with a nonnegative deterministic
sequence $\delta_{n}=O(r_{n})$, 
\[
f(\hat{x}_{n})-\psi^{*}=O_{\mathbb{P}^{*}}(r_{n}).
\]
If, in addition, \eqref{eq:sharp-growth} holds, then 
\[
\operatorname{dist}(\hat{x}_{n},S^{0})=O_{\mathbb{P}^{*}}(r_{n}^{1/\kappa}).
\]
\item If $\Delta_{n}=O_{\mathrm{as}^{*}}(r_{n})$, then
\[
\lvert\hat{\psi}_{n}-\psi^{*}\rvert=O_{\mathrm{as}^{*}}(r_{n}).
\]
For every empirical $\delta_{n}$-minimizer sequence satisfying
Assumption~\ref{ass:emp-min-membership}, with a nonnegative deterministic
sequence $\delta_{n}=O(r_{n})$,
\[
f(\hat{x}_{n})-\psi^{*}=O_{\mathrm{as}^{*}}(r_{n})
\]
and, under \eqref{eq:sharp-growth},
\[
\operatorname{dist}(\hat{x}_{n},S^{0})=O_{\mathrm{as}^{*}}(r_{n}^{1/\kappa}).
\]
\item If $\mathbb{E}^{*}\Delta_{n}=O(r_{n})$, then
\[
\mathbb{E}^{*}\lvert\hat{\psi}_{n}-\psi^{*}\rvert=O(r_{n}).
\]
If, in addition, $\delta_{n}=O(r_{n})$ for a nonnegative deterministic
sequence $(\delta_{n})$ and, for an empirical $\delta_{n}$-minimizer sequence,
the events $A_{\mathrm{min},N}$ in Assumption~\ref{ass:emp-min-membership}
can be chosen so that
\[
\mathbb{P}(A_{\mathrm{min},n}^{c})=O(r_{n}),
\]
then
\[
\mathbb{E}^{*}\bigl[f(\hat{x}_{n})-\psi^{*}\bigr]=O(r_{n}),
\]
and, under \eqref{eq:sharp-growth}, 
\[
\mathbb{E}^{*}\operatorname{dist}(\hat{x}_{n},S^{0})^{\kappa}=O(r_{n}).
\]
\end{enumerate}
\end{corollary}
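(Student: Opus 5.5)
The plan is to derive every part of Corollary~\ref{cor:generic-mode} pathwise from Theorem~\ref{thm:deterministic}. The only added work is handling outer probability, minimal measurable majorants, and the exceptional events of Assumption~\ref{ass:emp-min-membership}. The basic observation is that Theorem~\ref{thm:deterministic} gives, for each $\omega$ and $n$,
\[
\lvert\hat{\psi}_{n}-\psi^{*}\rvert\le\Delta_{n},
\]
and, on $A_{\mathrm{min},N}$ with $n\ge N$,
\[
0\le f(\hat{x}_{n})-\psi^{*}\le2\Delta_{n}+\delta_{n},\qquad
\operatorname{dist}(\hat{x}_{n},S^{0})^{\kappa}\le\alpha^{-1}(2\Delta_{n}+\delta_{n}).
\]
Since $\delta_n\le Cr_n$ for some constant $C$, each target quantity is dominated by a constant multiple of $\Delta_n+r_n$ off an exceptional set.

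\textbf{Part (i).} For the value, monotonicity of $\mathbb{P}^{*}$ gives $\mathbb{P}^{*}(\lvert\hat\psi_n-\psi^*\rvert>Mr_n)\le\mathbb{P}^{*}(\Delta_n>Mr_n)$. For the excess risk, I split according to $A_{\mathrm{min},N}$. Since $\mathbb{P}(A_{\mathrm{min},N}^c)\downarrow0$, given $\varepsilon>0$ I fix $N$ with $\mathbb{P}(A_{\mathrm{min},N}^c)<\varepsilon/2$. Then for $n\ge N$, subadditivity of outer probability gives
\[
\mathbb{P}^{*}\bigl(f(\hat x_n)-\psi^*>(2M+C)r_n\bigr)\le\mathbb{P}^{*}(\Delta_n>Mr_n)+\mathbb{P}(A_{\mathrm{min},N}^c).
\]
Choosing $M$ from the hypothesis so that the first term has limsup at most $\varepsilon/2$ finishes this step. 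The distance statement follows in the same way, raising to the power $1/\kappa$: the event $\{\operatorname{dist}>(\alpha^{-1}(2M+C))^{1/\kappa}r_n^{1/\kappa}\}$ is contained in the excess-risk event above.

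\textbf{Part (ii).} I use the facts that majorants are monotone and subadditive: if $U\le V$ then $\lvert U\rvert^{*}\le\lvert V\rvert^{*}$ almost surely, and $(V+c)^{*}=V^{*}+c$ for constants $c$ \citep[Lemma~1.2.2]{vdVW2023}. This gives $\lvert\hat\psi_n-\psi^*\rvert^{*}\le\Delta_n^{*}$ almost surely. For the minimizer I apply the same argument to $U_n=(f(\hat x_n)-\psi^*)\mathbf 1_{A_{\mathrm{min},N}}$ and use that the indicator is measurable. This yields, almost surely on $A_{\mathrm{min},N}$ and for $n\ge N$, $\lvert f(\hat x_n)-\psi^*\rvert^{*}\le2\Delta_n^{*}+\delta_n$. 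It also uses the fact that, for measurable $A$, $(U\mathbf 1_A)^{*}=U^{*}\mathbf 1_A$ almost surely, and I would cite or verify this. Dividing by $r_n$, the limsup is finite almost surely on each $A_{\mathrm{min},N}$, and hence on their union, which has probability one. Countably many null sets (one per $n$ and per $N$) are discarded. The distance case is analogous, using $(U^{1/\kappa})^{*}=(U^{*})^{1/\kappa}$ for monotone continuous maps.

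\textbf{Part (iii).} The value bound is immediate from monotonicity of $\mathbb{E}^{*}$. For the minimizer I write
\[
f(\hat x_n)-\psi^*\le(2\Delta_n+\delta_n)+\bigl(f(\hat x_n)-\psi^*\bigr)\mathbf 1_{A_{\mathrm{min},n}^c}.
\]
Membership holds on $A_{\mathrm{min},n}$ because $n\ge n$. The last term is bounded by $2\lVert f\rVert_\infty\mathbf 1_{A^c_{\mathrm{min},n}}$, because $f$ is bounded. This is the point where boundedness of $f$ is essential, and I expect it to be the main obstacle: without it, excess risk off the good event could not be controlled. Subadditivity of $\mathbb{E}^{*}$ then gives
\[
\mathbb{E}^{*}\bigl[f(\hat x_n)-\psi^*\bigr]\le2\mathbb{E}^{*}\Delta_n+\delta_n+2\lVert f\rVert_\infty\mathbb{P}(A^c_{\mathrm{min},n})=O(r_n).
\]
For the distance, sharp growth gives $\operatorname{dist}^{\kappa}\le\alpha^{-1}(f(\hat x_n)-\psi^*)$ everywhere, so this case reduces to the excess-risk bound.
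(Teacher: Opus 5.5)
Your proposal is correct and follows the paper's proof essentially step for step. Both rely on the pathwise bounds from Theorem~\ref{thm:deterministic}. In (i) both choose $N$ with $\mathbb{P}(A_{\mathrm{min},N}^{c})$ small and use subadditivity of $\mathbb{P}^{*}$. In (iii) both use the decomposition $f(\hat{x}_{n})-\psi^{*}\le2\Delta_{n}+\delta_{n}+B_{f}\mathbf{1}_{A_{\mathrm{min},n}^{c}}$ with $B_{f}\le2\lVert f\rVert_{\infty}$, followed by subadditivity of $\mathbb{E}^{*}$.

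The only variation is in (ii). You localize to each $A_{\mathrm{min},N}$ via $(U\mathbf{1}_{A})^{*}=U^{*}\mathbf{1}_{A}$, whereas the paper reuses the $B_{f}$-decomposition, so that the majorant $2\Delta_{n}^{*}+\delta_{n}+B_{f}\mathbf{1}_{A_{\mathrm{min},n}^{c}}$ has an indicator term that is eventually zero on $A_{\mathrm{min}}$. Both routes are valid. Your stated monotonicity fact should read $0\le U\le V\Rightarrow U^{*}\le V^{*}$, which is all you actually use.
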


\subsection{Weak convergence}

The next assumption isolates the probabilistic input needed for the
weak-limit theorem.

\begin{assumption}[Weak convergence]\label{ass:weak} There exist
a deterministic sequence $\tau_{n}\uparrow\infty$ and a Borel
measurable $\ell^{\infty}(X)$-valued random element $G$ such that
\begin{equation}
\tau_{n}(\hat{f}_{n}-f)\rightsquigarrow G\qquad\text{in }\ell^{\infty}(X)\label{eq:weak-empirical}
\end{equation}
in the Hoffmann--Jørgensen sense. \end{assumption}

\begin{theorem}[Weak limit of the optimal value]\label{thm:weakvalue}
Assume that $f\in\ell^{\infty}(X)$, that $\hat{f}_{n}\in\ell^{\infty}(X)$
almost surely for all sufficiently large $n$, and that Assumption~\ref{ass:weak}
holds. Then 
\begin{equation}
\tau_{n}(\hat{\psi}_{n}-\psi^{*})\rightsquigarrow\iota_{f}'(G)=\lim_{\varepsilon\downarrow0}\inf_{x\in S^{\varepsilon}}G(x).\label{eq:weak-value}
\end{equation}
Moreover, 
\begin{equation}
\tau_{n}(\hat{\psi}_{n}-\psi^{*})-\iota_{f}'\bigl(\tau_{n}(\hat{f}_{n}-f)\bigr)=o_{\mathbb{P}^{*}}(1).\label{eq:weak-linearization}
\end{equation}
\end{theorem}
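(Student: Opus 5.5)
The plan is to apply the directional delta method (Theorem~\ref{thm:directional-delta}) with $U=\ell^{\infty}(X)$, $V=\mathbb{R}$, $\phi=\iota$, $\mu=f$, and $Y_{n}=\hat{f}_{n}$. By Theorem~\ref{thm:infdir}, $\iota$ is Hadamard directionally differentiable at $f$, and $\iota'_{f}(g)=\lim_{\varepsilon\downarrow0}\inf_{x\in S^{\varepsilon}}g(x)$; here $S^{\varepsilon}=S_{f}^{\varepsilon}$ by the definitions \eqref{eq:Sfeps} and \eqref{eq:near-minimizers}. Assumption~\ref{ass:weak} supplies $\tau_{n}\to\infty$ and a Borel measurable limit $G$ with $\tau_{n}(Y_{n}-\mu)\rightsquigarrow G$. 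Moreover, $\iota(\hat{f}_{n})=\hat{\psi}_{n}$ and $\iota(f)=\psi^{*}$. The two conclusions of Theorem~\ref{thm:directional-delta} then read exactly as \eqref{eq:weak-value} and \eqref{eq:weak-linearization}.

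The one technical point, and the main obstacle, is that $\hat{f}_{n}$ belongs to $\ell^{\infty}(X)$ only almost surely and only for large $n$, whereas the delta method needs maps $\Omega\to\ell^{\infty}(X)$. I would handle this by defining a modified sequence $\tilde{f}_{n}$. Set $\tilde{f}_{n}=\hat{f}_{n}$ on the event where $\hat{f}_{n}$ is bounded, and set $\tilde{f}_{n}=f$ otherwise. For large $n$ the exceptional set is null, so $\tilde{f}_{n}$ and $\hat{f}_{n}$ agree outside a null set.

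Outer expectations and outer probabilities are unchanged by modifications on null sets. To see this, take any measurable majorant and add $+\infty$ on a measurable null superset of the discrepancy set; this does not change its expectation. Hence $\tau_{n}(\tilde{f}_{n}-f)\rightsquigarrow G$ still holds, since Hoffmann--Jørgensen weak convergence only involves $\mathbb{E}^{*}$ of bounded continuous functionals. For the same reason, the $o_{\mathbb{P}^{*}}(1)$ statement, and the weak convergence of the real-valued quantities $\tau_{n}(\iota(\tilde{f}_{n})-\psi^{*})$, transfer back to $\hat{\psi}_{n}$.

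Finitely many initial indices are irrelevant for both weak convergence and $o_{\mathbb{P}^{*}}$, because both are asymptotic statements.

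Finally, the limit $\iota'_{f}(G)$ is a well-defined real random variable. Theorem~\ref{thm:infdir} shows that $\iota'_{f}$ is a limit of monotone (nondecreasing in $\varepsilon\downarrow0$) bounded quantities, and it is Lipschitz, hence continuous, on $\ell^{\infty}(X)$. Therefore $\iota'_{f}(G)$ is measurable because $G$ is Borel measurable. This completes the plan.
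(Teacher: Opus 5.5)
Your proposal is correct and is essentially the paper's proof: both apply the directional delta method (Theorem~\ref{thm:directional-delta}) with $U=\ell^{\infty}(X)$, $V=\mathbb{R}$, $\phi=\iota$, $Y_{n}=\hat{f}_{n}$, take the derivative from Theorem~\ref{thm:infdir}, and modify $\hat{f}_{n}$ on a null set for large $n$. You go further than the paper in spelling out the null-set and initial-index bookkeeping and the measurability of $\iota_{f}'(G)$. The $1$-Lipschitz property of $\iota_{f}'$ that you invoke there is not stated in Theorem~\ref{thm:infdir} (which only asserts it for $\iota$), but it follows at once from the formula, since each map $g\mapsto\inf_{x\in S^{\varepsilon}}g(x)$ is $1$-Lipschitz.
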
 
\begin{corollary}[Donsker theorem implications]\label{cor:donsker}
Assume that $f\in\ell^{\infty}(X)$ and that $\hat{f}_{n}\in\ell^{\infty}(X)$
almost surely for all sufficiently large $n$. Suppose that, for a Borel measurable
random element $G$,
\[
\sqrt{n}(\hat{f}_{n}-f)\rightsquigarrow G\qquad\text{in }\ell^{\infty}(X).
\]
Then 
\begin{equation}
\sqrt{n}(\hat{\psi}_{n}-\psi^{*})\rightsquigarrow\lim_{\varepsilon\downarrow0}\inf_{x\in S^{\varepsilon}}G(x).\label{eq:sqrt-value-limit}
\end{equation}
In particular, 
\begin{equation}
\hat{\psi}_{n}-\psi^{*}=O_{\mathbb{P}^{*}}(n^{-1/2}),\label{eq:sqrt-rate}
\end{equation}
and for every empirical $\delta_{n}$-minimizer sequence satisfying
Assumption~\ref{ass:emp-min-membership}, with a nonnegative
deterministic sequence $\delta_{n}=O(n^{-1/2})$, 
\begin{equation}
f(\hat{x}_{n})-\psi^{*}=O_{\mathbb{P}^{*}}(n^{-1/2}).\label{eq:sqrt-excess}
\end{equation}
If, in addition \eqref{eq:sharp-growth} holds, then 
\begin{equation}
\operatorname{dist}(\hat{x}_{n},S^{0})=O_{\mathbb{P}^{*}}(n^{-1/(2\kappa)}).\label{eq:sqrt-distance}
\end{equation}
\end{corollary}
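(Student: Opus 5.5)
The plan is to obtain the weak limit \eqref{eq:sqrt-value-limit} as the special case $\tau_{n}=\sqrt{n}$ of Theorem~\ref{thm:weakvalue}, and then derive the three rate statements from Corollary~\ref{cor:generic-mode}(i) with $r_{n}=n^{-1/2}$. For the first part I check Assumption~\ref{ass:weak}. The sequence $\tau_{n}=\sqrt{n}$ is deterministic and increases to infinity. The limit $G$ is Borel measurable by hypothesis, and the weak convergence is exactly the displayed assumption. Together with $f\in\ell^{\infty}(X)$ and $\hat{f}_{n}\in\ell^{\infty}(X)$ eventually almost surely, Theorem~\ref{thm:weakvalue} gives \eqref{eq:sqrt-value-limit}, with the limit written via the formula for $\iota_{f}'$ from Theorem~\ref{thm:infdir}.

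For the rates, the key step is to show $\Delta_{n}=O_{\mathbb{P}^{*}}(n^{-1/2})$, i.e.\ that $\sqrt{n}\,\Delta_{n}=\lVert\sqrt{n}(\hat{f}_{n}-f)\rVert_{\infty}$ is asymptotically tight in outer probability.

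\begin{itemize}
\item The norm is continuous on $\ell^{\infty}(X)$, so the Hoffmann--Jørgensen continuous mapping theorem gives $\sqrt{n}\,\Delta_{n}\rightsquigarrow\lVert G\rVert_{\infty}$.
\item Since $G$ is a Borel measurable random element, $\lVert G\rVert_{\infty}$ is a real random variable and hence tight: for each $\varepsilon>0$ there is $M$ with $\mathbb{P}(\lVert G\rVert_{\infty}\ge M)\le\varepsilon$.
\item The portmanteau theorem for outer weak convergence (applied to the closed set $[M,\infty)$) then gives $\limsup_{n}\mathbb{P}^{*}(\sqrt{n}\,\Delta_{n}\ge M)\le\varepsilon$.
\end{itemize}

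This is the only genuinely probabilistic point. It is where I expect care to be needed, because $\Delta_{n}$ may be nonmeasurable, so everything must be phrased with $\mathbb{P}^{*}$. Working on the event where $\hat{f}_{n}\in\ell^{\infty}(X)$ causes no issue, since that event has probability one for large $n$.

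With $\Delta_{n}=O_{\mathbb{P}^{*}}(n^{-1/2})$ in hand, Corollary~\ref{cor:generic-mode}(i) with $r_{n}=n^{-1/2}$ (positive and decreasing to zero) immediately yields three conclusions:
\begin{itemize}
\item the value rate \eqref{eq:sqrt-rate};
\item for $\delta_{n}=O(n^{-1/2})$ and $\hat{x}_{n}$ satisfying Assumption~\ref{ass:emp-min-membership}, the excess-risk rate \eqref{eq:sqrt-excess};
\item under \eqref{eq:sharp-growth}, the distance rate $O_{\mathbb{P}^{*}}(r_{n}^{1/\kappa})=O_{\mathbb{P}^{*}}(n^{-1/(2\kappa)})$, which is \eqref{eq:sqrt-distance}.
\end{itemize}
The value rate \eqref{eq:sqrt-rate} also follows directly from \eqref{eq:value-perturbation} of Theorem~\ref{thm:deterministic}, as a sanity check.

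If one wanted to avoid Corollary~\ref{cor:generic-mode}, the membership assumption would be handled as follows. For large $N$, $\mathbb{P}(A_{\mathrm{min},N}^{c})$ is small, and on $A_{\mathrm{min},N}$ with $n\ge N$, Theorem~\ref{thm:deterministic}(c)--(d) applies pointwise. Outer probabilities then split by subadditivity of $\mathbb{P}^{*}$.
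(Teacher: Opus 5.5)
Your proposal is correct and follows the paper's proof essentially step for step. Theorem~\ref{thm:weakvalue} with $\tau_{n}=\sqrt{n}$ gives the weak limit, weak convergence of $\sqrt{n}(\hat{f}_{n}-f)$ gives $\Delta_{n}=O_{\mathbb{P}^{*}}(n^{-1/2})$, and Corollary~\ref{cor:generic-mode}(i) with $r_{n}=n^{-1/2}$ delivers the three rates; your continuous-mapping and portmanteau argument just spells out the tightness step that the paper states in one line.
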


\subsection{Outer almost-sure and outer-mean rates from uniform bounds}

Outer almost-sure bounds on $\Delta_{n}$ transfer to values and
$\varepsilon$-minimizers. Outer-mean bounds transfer to values immediately
and to $\varepsilon$-minimizers under the corresponding membership-event
tail control.

\begin{theorem}[Outer almost-sure and outer-mean transfer]\label{thm:as-mean}
Let $b_{n}=\sqrt{\mathrm{LL}(n)/n}$. 
\begin{enumerate}[label=(\roman*),itemsep=0.25em]
\item If
\begin{equation}
\Delta_{n}=O_{\mathrm{as}^{*}}(b_{n}),\label{eq:blil-input}
\end{equation}
then
\begin{equation}
\lvert\hat{\psi}_{n}-\psi^{*}\rvert=O_{\mathrm{as}^{*}}(b_{n}).\label{eq:as-value}
\end{equation}
Moreover, for every empirical $\delta_{n}$-minimizer sequence satisfying
Assumption~\ref{ass:emp-min-membership}, with a nonnegative deterministic
sequence $\delta_{n}=O(b_{n})$,
\begin{equation}
f(\hat{x}_{n})-\psi^{*}=O_{\mathrm{as}^{*}}(b_{n}).\label{eq:as-excess}
\end{equation}
Under the same assumption, if \eqref{eq:sharp-growth} also holds, then
\begin{equation}
\operatorname{dist}(\hat{x}_{n},S^{0})=O_{\mathrm{as}^{*}}\bigl(b_{n}^{1/\kappa}\bigr).\label{eq:as-distance}
\end{equation}
\item If, for some deterministic sequence $\rho_{n}\downarrow0$, 
\begin{equation}
\mathbb{E}^{*}\Delta_{n}=O(\rho_{n}),\label{eq:mean-input}
\end{equation}
then 
\begin{equation}
\mathbb{E}^{*}\lvert\hat{\psi}_{n}-\psi^{*}\rvert=O(\rho_{n}).\label{eq:mean-value}
\end{equation}
If, in addition, $\delta_{n}=O(\rho_{n})$ for a nonnegative deterministic
sequence $(\delta_{n})$ and, for an empirical $\delta_{n}$-minimizer sequence,
the events $A_{\mathrm{min},N}$ in Assumption~\ref{ass:emp-min-membership}
can be chosen so that
\[
\mathbb{P}(A_{\mathrm{min},n}^{c})=O(\rho_{n}),
\]
then
\begin{equation}
\mathbb{E}^{*}\bigl[f(\hat{x}_{n})-\psi^{*}\bigr]=O(\rho_{n}).\label{eq:mean-excess}
\end{equation}
If \eqref{eq:sharp-growth} also holds, then 
\begin{equation}
\mathbb{E}^{*}\operatorname{dist}(\hat{x}_{n},S^{0})^{\kappa}=O(\rho_{n}).\label{eq:mean-distance}
\end{equation}
\end{enumerate}
\end{theorem}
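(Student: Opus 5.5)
Theorem~\ref{thm:as-mean} is essentially Corollary~\ref{cor:generic-mode}(ii)--(iii) specialized to $r_{n}=b_{n}$ and $r_{n}=\rho_{n}$. I would nevertheless write out the argument directly, so that the handling of minimal measurable majorants and of the membership events is visible. Everything rests on the pathwise inequalities of Theorem~\ref{thm:deterministic}. For every $\omega$ we have $|\hat\psi_{n}-\psi^{*}|\le\Delta_{n}$. On $A_{\mathrm{min},N}$, for $n\ge N$, we have $\hat{x}_{n}\in\hat{S}_{n}^{\delta_{n}}$, and therefore
\[
0\le f(\hat{x}_{n})-\psi^{*}\le2\Delta_{n}+\delta_{n},\qquad\operatorname{dist}(\hat{x}_{n},S^{0})^{\kappa}\le\alpha^{-1}(2\Delta_{n}+\delta_{n}).
\]

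For part (i), I use monotonicity of minimal measurable majorants: if $U\le V$ pointwise, then $U^{*}\le V^{*}$ almost surely. I also use the fact that for a deterministic $c$ we have $(2V+c)^{*}=2V^{*}+c$ almost surely. These follow from \citet[Lemma~1.2.2]{vdVW2023}. Applying them gives $|\hat\psi_{n}-\psi^{*}|^{*}\le\Delta_{n}^{*}$ almost surely, for each $n$. Because there are only countably many $n$, these inequalities hold simultaneously off a single null set. Dividing by $b_{n}$ and taking the limsup then gives \eqref{eq:as-value}.

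For the excess risk, fix $\omega\in A_{\mathrm{min}}$, so that $\omega\in A_{\mathrm{min},N}$ for some $N$. For $n\ge N$ the pathwise bound holds on the measurable set $A_{\mathrm{min},N}$. Consequently $(f(\hat{x}_{n})-\psi^{*})^{*}\mathbf{1}_{A_{\mathrm{min},N}}\le(2\Delta_{n}^{*}+\delta_{n})$ almost surely. To justify this, split $U=U\mathbf{1}_{A}+U\mathbf{1}_{A^{c}}$ and use that $(U\mathbf{1}_{A})^{*}=U^{*}\mathbf{1}_{A}$ for measurable $A$. Taking a countable union over $N$ and $n$ of null exceptional sets, we get for almost every $\omega$ that
\[
\limsup_{n}(f(\hat{x}_{n})-\psi^{*})^{*}/b_{n}\le2\limsup_{n}\Delta_{n}^{*}/b_{n}+\sup_{n}\delta_{n}/b_{n}<\infty.
\]
For the distance bound, $t\mapsto t^{1/\kappa}$ is monotone and continuous. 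Hence $(\operatorname{dist}^{\kappa})^{*}$ controls $\operatorname{dist}^{*}$ in the form $\operatorname{dist}(\hat{x}_{n},S^{0})^{*}\le\bigl[\alpha^{-1}(2\Delta_{n}^{*}+\delta_{n})\bigr]^{1/\kappa}$. Dividing by $b_{n}^{1/\kappa}$ then yields \eqref{eq:as-distance}.

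For part (ii), \eqref{eq:mean-value} follows immediately, because outer expectation is monotone: $\mathbb{E}^{*}|\hat\psi_{n}-\psi^{*}|\le\mathbb{E}^{*}\Delta_{n}$. For the excess risk I split on $A_{n}:=A_{\mathrm{min},n}$, on which membership holds at index $n$. The outer expectation is subadditive, $\mathbb{E}^{*}(U+V)\le\mathbb{E}^{*}U+\mathbb{E}^{*}V$, which gives
\[
\mathbb{E}^{*}[f(\hat{x}_{n})-\psi^{*}]\le\mathbb{E}^{*}[(2\Delta_{n}+\delta_{n})\mathbf{1}_{A_{n}}]+\mathbb{E}^{*}[(f(\hat{x}_{n})-\psi^{*})\mathbf{1}_{A_{n}^{c}}].
\]
The first term is at most $2\mathbb{E}^{*}\Delta_{n}+\delta_{n}=O(\rho_{n})$. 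For the second term I use boundedness of the objective: $0\le f(\hat{x}_{n})-\psi^{*}\le2\lVert f\rVert_{\infty}$. The second term is therefore at most $2\lVert f\rVert_{\infty}\mathbb{P}(A_{n}^{c})=O(\rho_{n})$. The distance statement \eqref{eq:mean-distance} is handled the same way on $A_{n}$, multiplied by $\alpha^{-1}$.

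I expect the main obstacle to be the complement term of the distance bound in \eqref{eq:mean-distance}. Sharp growth gives $\operatorname{dist}(x,S^{0})^{\kappa}\le\alpha^{-1}(f(x)-\psi^{*})\le2\alpha^{-1}\lVert f\rVert_{\infty}$ for every $x\in X$, not only for near-minimizers. With this bound the complement term is again $O(\mathbb{P}(A_{n}^{c}))=O(\rho_{n})$, and this is the step to check carefully. The other point requiring care throughout is bookkeeping: justifying that multiplying by indicators of measurable events commutes with taking $^{*}$, and that countably many almost-sure exceptional sets can be discarded.
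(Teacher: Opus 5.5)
Your proposal is correct and follows the paper's route. The paper proves Theorem~\ref{thm:as-mean} by invoking Corollary~\ref{cor:generic-mode}(ii)--(iii) with $r_{n}=b_{n}$ and $r_{n}=\rho_{n}$. That corollary's proof uses the same ingredients you do: the pathwise bounds of Theorem~\ref{thm:deterministic}, a split on $A_{\mathrm{min},n}$ with the complement controlled by $B_{f}=\sup_{x}f(x)-\psi^{*}$ (and by $B_{f}/\alpha$ via sharp growth for the distance), and subadditivity of $\mathbb{E}^{*}$.

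The only difference is cosmetic and occurs in part (i). There the paper uses the everywhere-valid majorant $2\Delta_{n}^{*}+\delta_{n}+B_{f}\mathbf{1}_{A_{\mathrm{min},n}^{c}}$, whose indicator term vanishes eventually on $A_{\mathrm{min}}$. You instead localize on $A_{\mathrm{min},N}$ using $(U\mathbf{1}_{A})^{*}=U^{*}\mathbf{1}_{A}$ and a countable union of null sets. Both arguments are valid.
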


\begin{remark}
The tail hypothesis
\[
\mathbb{P}(A_{\mathrm{min},n}^{c})=O(\rho_{n})
\]
in the outer-mean minimizer conclusions is automatic if the empirical
membership condition $\hat{x}_{n}\in\hat{S}_{n}^{\delta_{n}}$ holds
for all sufficiently large $n$, uniformly in $\omega$; then one may take $A_{\mathrm{min},N}=\Omega$
for all large $N$, and the complement probabilities are eventually zero.
This is the relevant case for deterministic or certified algorithms: if an
algorithm returns $\hat{x}_{n}$ together with a measurable lower bound
$L_{n}\le \hat{\psi}_{n}$ such that
\[
\hat{f}_{n}(\hat{x}_{n})-L_{n}\le \delta_{n},
\]
then automatically $\hat{x}_{n}\in\hat{S}_{n}^{\delta_{n}}$. This covers branch-and-bound algorithms and their variants; see, e.g., \citet[Chapter~5]{LocatelliSchoen2013} and \citet[Chapters~11--12]{Floudas2000}.
More generally, suppose there are events $B_{n}\in\mathcal{F}$ such that
$\hat{x}_{n}\in\hat{S}_{n}^{\delta_{n}}$ on $B_{n}$ and
\[
\sum_{k\ge n}\mathbb{P}(B_{k}^{c})=O(\rho_{n}).
\]
Here $B_{n}$ can be interpreted as the measurable success event on which
an implementation returns a $\delta_{n}$-empirical minimizer at sample
size $n$. The displayed tail bound says that the probability of at least
one future membership failure from time $n$ onward has the same order as
the desired outer-mean rate. In the root-$n$ outer-mean regime
$\rho_{n}=n^{-1/2}$, it is enough that
$\mathbb{P}(B_{n}^{c})\le Cn^{-3/2}$ for a constant $C$, since
$\sum_{k\ge n}k^{-3/2}=O(n^{-1/2})$. This applies, for instance, to a
randomized or time-limited implementation of such a certified scheme whose
certificate $\hat{f}_{n}(\hat{x}_{n})-L_{n}\le \delta_{n}$ fails with
probability at most $Cn^{-3/2}$.
\end{remark}

A convenient sufficient condition for the outer-mean input \eqref{eq:mean-input}
is provided by VC-subgraph complexity.

\begin{proposition}\label{prop:vc-mean}
Let $\mathcal{H}$ be a VC-subgraph class of measurable functions with
measurable envelope $H\in L_{2}(P)$, and assume that $\mathcal{H}$ is a
$P$-measurable class.
Then 
\begin{equation}
\mathbb{E}^{*}\sup_{h\in\mathcal{H}}\lvert(\mathbb{P}_{n}-P)h\rvert=O(n^{-1/2}).\label{eq:vc-mean}
\end{equation}
\end{proposition}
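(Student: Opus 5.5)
The plan is to use the standard maximal inequality for empirical processes indexed by classes with a uniform entropy integral, namely \citet[Theorem~2.14.1]{vdVW2023}. It states that for a $P$-measurable class $\mathcal{H}$ with measurable envelope $H$,
\[
\mathbb{E}^{*}\lVert\mathbb{G}_{n}\rVert_{\mathcal{H}}\lesssim J(1,\mathcal{H})\,\lVert H\rVert_{P,2},
\qquad
J(1,\mathcal{H})=\sup_{Q}\int_{0}^{1}\sqrt{1+\log N\bigl(\eta\lVert H\rVert_{Q,2},\mathcal{H},L_{2}(Q)\bigr)}\,d\eta .
\]
Here the supremum runs over finitely discrete probability measures $Q$ with $\lVert H\rVert_{Q,2}>0$. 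Once we show $J(1,\mathcal{H})<\infty$, dividing by $\sqrt n$ gives \eqref{eq:vc-mean}, because
\[
\mathbb{E}^{*}\sup_{h}\lvert(\mathbb{P}_{n}-P)h\rvert=n^{-1/2}\,\mathbb{E}^{*}\lVert\mathbb{G}_{n}\rVert_{\mathcal{H}}.
\]
The right-hand side of the maximal inequality does not depend on $n$, so the bound is uniform in $n$.

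The steps are as follows.
\begin{enumerate}[label=(\arabic*),itemsep=0.25em]
\item Apply the VC covering theorem \citep[Theorem~2.6.7]{vdVW2023} to the VC-subgraph class $\mathcal{H}$ with envelope $H$. It gives, for some $V$ determined by the VC index and a universal $K$,
\[
N\bigl(\eta\lVert H\rVert_{Q,2},\mathcal{H},L_{2}(Q)\bigr)\le K V(16e)^{V}\eta^{-2(V-1)}
\]
for all $0<\eta<1$ and all such $Q$. Hence the integrand in $J(1,\mathcal{H})$ is bounded by $C\sqrt{1+\log(1/\eta)}$, which is integrable on $(0,1]$, so $J(1,\mathcal{H})<\infty$.
\item Check that $P$-measurability of $\mathcal{H}$ is exactly the measurability hypothesis of the maximal inequality. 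That inequality is proved by symmetrization followed by a conditional sub-Gaussian chaining bound, and the symmetrization step requires $P$-measurability. This hypothesis is assumed in the statement.
\item Note that $\lVert H\rVert_{P,2}<\infty$ because $H\in L_{2}(P)$.
\item Conclude that $\mathbb{E}^{*}\lVert\mathbb{G}_{n}\rVert_{\mathcal{H}}\le C'<\infty$ for all $n$, and divide by $\sqrt n$ as above.
\end{enumerate}

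The main obstacle is bookkeeping in the outer-expectation and measurability setting. The maximal inequality must be invoked in its outer-expectation form. In addition, the covering bound concerns finitely discrete $Q$, including those with $\lVert H\rVert_{Q,2}=0$, and these must be handled separately. If the cited version requires $P$-measurability of auxiliary classes, the fallback is to symmetrize by hand \citep[Lemma~2.3.1]{vdVW2023}. Then, conditionally on the sample, apply Hoeffding's inequality and chaining \citep[Corollary~2.2.8]{vdVW2023} with respect to $L_{2}(\mathbb{P}_{n})$, bounded by $J(1,\mathcal{H})\lVert H\rVert_{\mathbb{P}_{n},2}$. Finally, use Jensen's inequality:
\[
\mathbb{E}\lVert H\rVert_{\mathbb{P}_{n},2}\le\lVert H\rVert_{P,2}.
\]
This route needs only the $P$-measurability of $\mathcal{H}$ itself.
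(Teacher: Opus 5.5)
Your proposal is correct and follows essentially the same route as the paper's proof. Both apply the outer-expectation maximal inequality for $P$-measurable classes (Theorem~2.14.1 of van der Vaart--Wellner), show the uniform entropy integral $J(1,\mathcal{H})$ is finite via the polynomial VC covering bound (Theorem~2.6.7), and then divide by $\sqrt{n}$. Your symmetrization fallback is not needed, since the cited inequality already requires only $P$-measurability of $\mathcal{H}$.
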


\subsection{Rates for definable classes with square-integrable envelopes}

The preceding theorems become directly applicable once the empirical-process
input is available. The next result packages the abstract rate theory
together with o-minimal definability and the VC-subgraph Donsker,
maximal-inequality, and LIL consequences available under a square-integrable
envelope.

\begin{theorem}[SAA rates for definable VC-subgraph classes]\label{thm:tame-main}
Retain the setup of Section~\ref{sec:setup}; in particular, assume
that $h_{x}$ is $\mathcal{G}$-measurable for each $x\in X$. Assume that
$X\subseteq\mathbb{R}^{d}$ and $\Xi\subseteq\mathbb{R}^{m}$ are
definable in an o-minimal structure, that the total subgraph of $h$
is definable in the same structure, that the class 
\[
\mathcal{H}=\{h_{x}=h(x,\cdot):x\in X\}
\]
satisfies Assumption~\ref{ass:ep-meas},
and that $\mathcal{H}$ admits a measurable envelope $H\in L_{2}(P)$.
Then $f\in\ell^{\infty}(X)$, $\Delta_{n}<\infty$ almost surely for
every $n$, and $\mathcal{H}$ is a VC-subgraph class. Moreover, there
exists a tight centered Gaussian process $G$, viewed
as a random element of $\ell^{\infty}(X)$, such that 
\[
\sqrt{n}(\hat{f}_{n}-f)\rightsquigarrow G\qquad\text{in }\ell^{\infty}(X),
\]
\begin{equation}
\mathbb{E}^{*}\Delta_{n}=O(n^{-1/2}),\label{eq:tame-mean}
\end{equation}
and 
\begin{equation}
\Delta_{n}=O\!\left(\sqrt{\frac{\log\log n}{n}}\right)\qquad\text{almost surely.}\label{eq:tame-blil}
\end{equation}
Consequently, Corollary~\ref{cor:donsker} applies, Theorem~\ref{thm:as-mean}(i)
applies with $b_{n}=\sqrt{\mathrm{LL}(n)/n}$, and the value conclusion of
Theorem~\ref{thm:as-mean}(ii) applies with $\rho_{n}=n^{-1/2}$; the
empirical-minimizer outer-mean conclusions in Theorem~\ref{thm:as-mean}(ii)
apply to sequences whose membership events satisfy
$\mathbb{P}(A_{\mathrm{min},n}^{c})=O(n^{-1/2})$. \end{theorem}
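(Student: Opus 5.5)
The proof assembles results already stated; no new estimates are needed. First, the boundedness claims. Since $|h_x|\le H$ and $PH\le (PH^2)^{1/2}<\infty$, we get $|f(x)|\le PH$ for all $x$, so $f\in\ell^{\infty}(X)$. Also
\[
\Delta_n\le \mathbb{P}_nH+PH,
\]
and $\mathbb{P}_nH<\infty$ almost surely because $H$ is finite $P$-almost everywhere, being in $L_2(P)$. Next, the total subgraph of $h$ is definable and $X,\Xi$ are definable in the same o-minimal structure. Theorem~\ref{thm:def-vc}, with $\Theta=X$, then gives that the strict subgraph family has finite VC dimension, so $\mathcal{H}$ is a VC-subgraph class.

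Then I would feed $\mathcal{H}$ into the three empirical-process inputs.

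\textbf{Donsker.} Theorem~\ref{thm:vc-donsker}, using the envelope $H\in L_2(P)$ and Assumption~\ref{ass:ep-meas}(i)--(ii), yields $\mathbb{G}_n\rightsquigarrow\mathbb{G}_P$ in $\ell^\infty(\mathcal{H})$, with $\mathbb{G}_P$ tight, centered and Gaussian. Pull this back through the isometric linear operator $T$ of Remark~\ref{rem:pullback} to get $\sqrt n(\hat f_n-f)=T\mathbb{G}_n\rightsquigarrow G:=T\mathbb{G}_P$ in $\ell^\infty(X)$. The limit $G$ is centered Gaussian because $T$ is linear: its finite-dimensional marginals are marginals of $\mathbb{G}_P$. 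It is tight because $T$ is continuous, so the image of a tight Borel law is tight. Tightness also gives a Borel measurable version that is separable-range, which is what Corollary~\ref{cor:donsker} requires.

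\textbf{Mean and almost-sure rates.} Proposition~\ref{prop:vc-mean}, using Assumption~\ref{ass:ep-meas}(i), gives $\mathbb{E}^*\Delta_n=O(n^{-1/2})$, since $\Delta_n=\sup_{h\in\mathcal{H}}|(\mathbb{P}_n-P)h|$ by \eqref{eq:Delta-def}. Proposition~\ref{prop:vc-yukich}, using Assumption~\ref{ass:ep-meas}(iii), gives \eqref{eq:tame-blil}.

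The consequences then follow directly.
\begin{itemize}
\item Corollary~\ref{cor:donsker} applies, since its hypotheses are exactly the facts just established together with $\hat f_n\in\ell^\infty(X)$ almost surely.
\item For Theorem~\ref{thm:as-mean}(i), Assumption~\ref{ass:ep-meas}(iii) makes $\Delta_n$ measurable, so $O_{\mathrm{as}^*}$ coincides with ordinary almost-sure order. Moreover $\sqrt{\log\log n/n}\le b_n$ for large $n$, so \eqref{eq:tame-blil} implies \eqref{eq:blil-input}.
\item Theorem~\ref{thm:as-mean}(ii) applies with $\rho_n=n^{-1/2}\downarrow0$, for both the value statement and the minimizer statements, under the stated tail condition on $A_{\mathrm{min},n}^c$.
\end{itemize}

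The main obstacle I expect is the bookkeeping around measurability and the precise form of the limit, rather than any estimate. Two points need care. First, the pulled-back limit must be Borel measurable on the nonseparable space $\ell^\infty(X)$; I would argue this via tightness, since a tight limit concentrates on a $\sigma$-compact separable subset. Second, one must check that each cited result uses only the part of Assumption~\ref{ass:ep-meas} that is granted. A minor subtlety is the Donsker statement when $\mathcal{H}$ has repeated elements, that is, when $x\mapsto h_x$ is not injective. This is harmless because $T$ is defined through $\Pi$ and is still an isometry onto its image.
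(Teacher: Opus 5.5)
Your proposal is correct and follows the paper's route: boundedness from the envelope, the VC-subgraph property from Theorem~\ref{thm:def-vc}, then Theorem~\ref{thm:vc-donsker} with the pullback of Remark~\ref{rem:pullback}, Proposition~\ref{prop:vc-mean}, and Proposition~\ref{prop:vc-yukich}, followed by the transfer results. Your extra checks (Gaussianity and tightness of $T\mathbb{G}_P$, measurability of $\Delta_n$ making $O_{\mathrm{as}^*}$ ordinary almost-sure order, and $\sqrt{\log\log n/n}\le b_n$ eventually) are sound; Borel measurability of $G$ follows most directly from continuity of $T$ applied to the Borel measurable limit $\mathbb{G}_P$, not from tightness.
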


\section{Example applications}

\label{sec:applications}

This section records examples of discontinuous or
non-Lipschitz SAA optimization problems that fall outside the continuity-based
Banach-space framework of \citet{Banholzer2022}. In each example
the data take values in a fixed definable subset $\Xi\subseteq\mathbb{R}^{m}$,
the parameter set is a compact definable subset of Euclidean space,
and the loss is bounded and definable on parameter space times $\Xi$.
We assume throughout this section that $\mathcal{G}$ is the
Borel $\sigma$-algebra of $\Xi$, so the definable sections used below are
measurable. Hence each class satisfies the definability, boundedness,
and VC-subgraph parts of Theorem~\ref{thm:tame-main}. The remaining
task is to verify Assumption~\ref{ass:ep-meas}.

\subsection{Direct 0-1 classification for fixed-architecture neural networks}

Direct minimization of empirical $0$--$1$ loss for perceptrons
is studied by \citet{LiLin2007} and, using several exact and approximate
optimization algorithms, by \citet{NguyenSanner2013}. Section~7.2
of \citet{Westerhout2024} treats the same problem for a fixed
feedforward neural network. These are instances of a single template.
The simplest case is the perceptron loss 
\begin{equation}
h_{w,b}(z,y)=\mathbf{1}_{\{y(w^{\top}z+b)\le0\}},\label{eq:perceptron-loss}
\end{equation}
with data $\xi=(z,y)\in\Xi\subseteq\mathbb{R}^{m}\times\{-1,1\}$
and parameter $(w,b)\in\mathbb{R}^{m}\times\mathbb{R}$. More generally,
let $N_{\theta}:\mathbb{R}^{m}\to\mathbb{R}$ denote the score map
of a fixed finite network with parameter $\theta\in\Theta$. A representative example is the
ReLU architecture 
\begin{equation}
N_{\theta}(z)=a_{0}+\sum_{j=1}^{M}a_{j}\,\sigma(w_{j}^{\top}z+c_{j}),\qquad\sigma(r)=\max\{r,0\},\label{eq:relu-net}
\end{equation}
with parameter $\theta=(a_{0},a_{1},\dots,a_{M},w_{1},\dots,w_{M},c_{1},\dots,c_{M})$.
The associated classification loss is 
\begin{equation}
h_{\theta}(z,y)=\mathbf{1}_{\{y\,N_{\theta}(z)\le0\}}.\label{eq:nn-class-loss}
\end{equation}
For every fixed architecture whose activation functions are definable
in a common o-minimal structure, the score map $(\theta,z)\mapsto N_{\theta}(z)$
is definable on $\Theta\times\mathbb{R}^{m}$. The thresholded loss
\eqref{eq:nn-class-loss} is therefore bounded but generally discontinuous
in $\theta$.

\begin{proposition}\label{prop:classification} Let $\Theta$ be
a compact definable parameter set for a fixed finite architecture,
let $\Xi\subseteq\mathbb{R}^{m}\times\{-1,1\}$ be bounded and definable
in the same o-minimal structure, and assume that the score map $(\theta,z)\mapsto N_{\theta}(z)$
is definable. If, in addition, the class $\{h_{\theta}:\theta\in\Theta\}$
defined by \eqref{eq:nn-class-loss} satisfies Assumption~\ref{ass:ep-meas}, then it
is bounded and definable, its total subgraph is definable, Theorem~\ref{thm:def-vc}
yields that it is a VC-subgraph class, and Theorem~\ref{thm:tame-main}
applies. In particular, this covers the perceptron loss
\eqref{eq:perceptron-loss} and fixed ReLU networks of the form \eqref{eq:relu-net}
under the same measurability hypothesis. \end{proposition}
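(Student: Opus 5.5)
The plan is to verify, one by one, the hypotheses of Theorem~\ref{thm:tame-main} that are not assumed outright. These are boundedness, definability of the loss and of its total subgraph, and measurability of the sections; Theorem~\ref{thm:def-vc} then supplies the VC-subgraph property. Throughout, take $X=\Theta$ and write $h(\theta,(z,y))=\mathbf{1}_{\{yN_{\theta}(z)\le0\}}$ on $\Theta\times\Xi$.

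\textbf{Step 1 (boundedness and envelope).} Since $h$ takes values in $\{0,1\}$, the constant function $H\equiv1$ is a measurable envelope with $PH^{2}=1<\infty$. Consequently $f\in\ell^{\infty}(\Theta)$ and $\Delta_{n}\le1$.

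\textbf{Step 2 (definability).} The map $(\theta,z,y)\mapsto yN_{\theta}(z)$ is definable, because it is a product of a coordinate projection with the definable score map, and products and compositions of definable maps are definable. The set $Z=\{(\theta,z,y)\in\Theta\times\Xi: yN_{\theta}(z)\le0\}$ is therefore definable, being the preimage of $(-\infty,0]$ under a definable map.

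The total subgraph can then be written as
\[
\operatorname{Sub}(h)=\bigl(Z\times[1,\infty)\bigr)\cup\bigl((\Theta\times\Xi\setminus Z)\times[0,\infty)\bigr).
\]
This is a finite Boolean combination of products of definable sets, so it is definable. Likewise, the graph of $h$ is $(Z\times\{1\})\cup((\Theta\times\Xi\setminus Z)\times\{0\})$, which is definable, so $h$ itself is definable. Theorem~\ref{thm:def-vc} now yields that $\{h_{\theta}\}$ is a VC-subgraph class.

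\textbf{Step 3 (measurability of sections).} For fixed $\theta$, the fibre $Z_{\theta}\subseteq\Xi$ is definable. I would argue that every definable subset of Euclidean space is Borel: by cell decomposition it is a finite union of cells, and each cell is a locally closed submanifold and hence Borel. It follows that $h_{\theta}=\mathbf{1}_{Z_{\theta}}$ is $\mathcal{G}$-measurable, given that $\mathcal{G}$ is the Borel $\sigma$-algebra, as stipulated at the start of Section~\ref{sec:applications}.

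\textbf{Step 4 (conclusion).} Together with the assumed Assumption~\ref{ass:ep-meas} and the fact that $\Theta$ and $\Xi$ are definable, every hypothesis of Theorem~\ref{thm:tame-main} now holds. For the special cases, the perceptron is the network with $N_{(w,b)}(z)=w^{\top}z+b$, which is polynomial and hence semialgebraic. For the ReLU network \eqref{eq:relu-net}, $\sigma(r)=\max\{r,0\}$ is semialgebraic, so $N_{\theta}(z)$ is semialgebraic in $(\theta,z)$. Both fall into the semialgebraic structure, and the general case applies once $\Theta$ and $\Xi$ are taken semialgebraic. If $\Theta$ or $\Xi$ is instead definable only in a larger structure, one works in that structure, which contains the semialgebraic sets.

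The only step requiring care is Step 3, the Borel measurability of definable fibres. Everything else is closure bookkeeping, and Step 3 itself is settled by the cell decomposition theorem.
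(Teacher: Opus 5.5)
Your proposal is correct and follows the same route as the paper: definability of $\{yN_{\theta}(z)\le0\}$ makes the $\{0,1\}$-valued loss and its total subgraph definable, Theorem~\ref{thm:def-vc} then gives the VC-subgraph property, and the assumed measurability together with the constant envelope lets Theorem~\ref{thm:tame-main} apply. Your additional detail is sound and only fills in what the paper leaves implicit: the explicit formula for $\operatorname{Sub}(h)$, the cell-decomposition argument for Borel measurability of sections (which the paper simply asserts at the start of Section~\ref{sec:applications}), and the semialgebraicity of the ReLU score map.
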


\begin{remark}
A sufficient condition for the measurability hypothesis in
Proposition~\ref{prop:classification} is the following. Since $\Theta$
is compact and metrizable, fix a countable dense subset $\Theta_{0}\subseteq\Theta$.
Assume that for every fixed $z\in\mathbb{R}^{m}$ the map $\theta\mapsto N_{\theta}(z)$
is continuous and that the tie set
\[
A_{0}=\{(z,y)\in\Xi: \text{there exists }\theta\in\Theta\text{ with }N_{\theta}(z)=0\}
\]
has $P(A_{0})=0$, where $A_{0}\in\mathcal{G}$ is automatic here
because $A_{0}$ is definable and $\mathcal{G}$ is the Borel $\sigma$-algebra
of $\Xi$. This is a global no-tie condition. Let $A=\Xi\setminus A_{0}$. Then for every $(z,y)\in A$,
every $\theta\in\Theta$, and every sequence $(\theta_{m})$ in $\Theta_{0}$
with $\theta_{m}\to\theta$, continuity and $N_{\theta}(z)\neq0$ imply
that $h_{\theta_{m}}(z,y)\to h_{\theta}(z,y)$. Define a modified class by
\[
\tilde{h}_{\theta}(z,y)=h_{\theta}(z,y)\mathbf{1}_{A}(z,y),\qquad(\theta,(z,y))\in\Theta\times\Xi.
\]
Then every $\tilde{h}_{\theta}$ vanishes on $A_{0}$, and for every
$(z,y)\in\Xi$, every $\theta\in\Theta$, and every sequence $(\theta_{m})$
in $\Theta_{0}$ with $\theta_{m}\to\theta$, we have $\tilde{h}_{\theta_{m}}(z,y)\to\tilde{h}_{\theta}(z,y)$:
on $A$ this is the preceding argument, while on $A_{0}$ all terms are
zero. Thus the modified class $\widetilde{\mathcal{H}}=\{\tilde{h}_{\theta}:\theta\in\Theta\}$
is pointwise measurable. By Remark~\ref{rem:ptwise-meas}, it therefore
satisfies Assumption~\ref{ass:ep-meas}. The original class $\mathcal{H}=\{h_{\theta}:\theta\in\Theta\}$
satisfies Assumption~\ref{ass:ep-meas} as well. Indeed, for every
$n\in\mathbb{N}$ the set $A^{n}\subseteq\Xi^{n}$ has $P^{n}(A^{n})=1$, and on
$A^{n}$ the supremum maps in Assumption~\ref{ass:ep-meas}(i) and (iii)
for $\mathcal{H}$ and $\widetilde{\mathcal{H}}$ coincide because $h_{\theta}=\tilde{h}_{\theta}$
on $A$ and hence $Ph_{\theta}=P\tilde{h}_{\theta}$ for every $\theta$. Likewise,
if $u=h_{\theta_{1}}-h_{\theta_{2}}$ and $\tilde{u}=\tilde{h}_{\theta_{1}}-\tilde{h}_{\theta_{2}}$,
then $u=\tilde{u}$ on $A$ and $\lVert u\rVert_{P,2}=\lVert\tilde{u}\rVert_{P,2}$,
so the same coincidence holds for the classes $\mathcal{H}_{\delta}$ and
$\mathcal{H}_{\infty}^{2}$. Hence measurability for the modified class
transfers to the original one in the relevant $P^{n}$-completions.
\end{remark}

\subsection{Threshold regression and discontinuous M-estimation}

Threshold regression and discontinuous change-point models provide
another natural source of discontinuous SAA objectives; see \citet{Hansen2000}
and \citet{KoulQianSurgailis2003}. A standard multivariate two-regime
regression model takes the form 
\begin{equation}
m_{\theta}(x,t)=x^{\top}\beta_{1}\,\mathbf{1}_{\{t\le s\}}+x^{\top}\beta_{2}\,\mathbf{1}_{\{t>s\}},\qquad\theta=(\beta_{1},\beta_{2},s)\in\mathbb{R}^{2p+1},\label{eq:threshold-reg}
\end{equation}
where $x\in\mathbb{R}^{p}$ is the regressor vector (including an
intercept if desired), $t\in\mathbb{R}$ is the threshold variable,
and $y\in\mathbb{R}$ is the response. This contains the usual sample-splitting
models as special cases; restrictions in which only part of the coefficient
vector changes across regimes are obtained by constraining coordinates
of $(\beta_{1},\beta_{2})$. For data $\xi=(x,t,y)$ and a definable
contrast $\rho:\mathbb{R}\to\mathbb{R}_{+}$, such as $\rho(r)=r^{2}$
or $\rho(r)=|r|$, the loss is 
\begin{equation}
h_{\theta}(x,t,y)=\rho\bigl(y-m_{\theta}(x,t)\bigr).\label{eq:threshold-loss}
\end{equation}
Even when $\rho$ is smooth, the map $\theta\mapsto h_{\theta}(x,t,y)$
is generally discontinuous in the threshold coordinate $s$ whenever
$t=s$.

\begin{proposition}\label{prop:threshold} Let $\Theta\subseteq\mathbb{R}^{2p+1}$
be compact and semialgebraic, let $\Xi\subseteq\mathbb{R}^{p}\times\mathbb{R}\times\mathbb{R}$
be bounded and semialgebraic, and let $\rho:\mathbb{R}\to\mathbb{R}_{+}$
be definable and bounded on the residual range
$\{y-m_{\theta}(x,t):(\theta,x,t,y)\in\Theta\times\Xi\}$. If, in addition, the
class $\{h_{\theta}:\theta\in\Theta\}$ defined by \eqref{eq:threshold-loss}
satisfies Assumption~\ref{ass:ep-meas},
then it is bounded and definable, its total subgraph is definable,
Theorem~\ref{thm:def-vc} yields that it is a VC-subgraph class, and
Theorem~\ref{thm:tame-main} applies to least-squares, least-absolute-deviation,
and other definable contrast criteria in threshold regression. \end{proposition}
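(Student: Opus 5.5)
The plan is to check the hypotheses of Theorem~\ref{thm:tame-main} for the class $\mathcal{H}=\{h_{\theta}:\theta\in\Theta\}$ from \eqref{eq:threshold-loss}, using the semialgebraic structure $(\mathbb{R},+,\cdot,<)$ for the regression part. Assumption~\ref{ass:ep-meas} is given. So three things remain: definability of the total subgraph, a square-integrable measurable envelope, and measurability of each section $h_{\theta}$.

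\emph{Definability.} The map $(\theta,x,t)\mapsto m_{\theta}(x,t)$ is semialgebraic. Its graph is
\[
\{(\theta,x,t,v): (t\le s \wedge v=x^{\top}\beta_{1})\vee(t>s\wedge v=x^{\top}\beta_{2})\},
\]
a finite Boolean combination of polynomial equalities and inequalities in $(\beta_{1},\beta_{2},s,x,t,v)$. The residual map $r(\theta,x,t,y)=y-m_{\theta}(x,t)$ is therefore semialgebraic. Semialgebraic sets are definable in every o-minimal expansion of the ordered field, so we may work in the structure $\mathcal{R}$ in which $\rho$ is definable. There $h(\theta,\xi)=\rho(r(\theta,\xi))$ is definable, because its graph is the projection onto $(\theta,\xi,w)$ of
\[
\{(\theta,\xi,v,w): v=r(\theta,\xi),\ w=\rho(v)\}.
\]
It follows that $\operatorname{Sub}(h)=\{(\theta,\xi,u):h(\theta,\xi)\le u\}$ is definable, again as a projection of an intersection of definable sets. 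The sets $\Theta$ and $\Xi$ are semialgebraic, hence definable in $\mathcal{R}$. Theorem~\ref{thm:def-vc} then shows that $\mathcal{H}$ is VC-subgraph.

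\emph{Envelope and measurability.} Let $B=\sup\{\rho(v): v\in R\}$, where $R$ is the residual range in the statement; $B<\infty$ by hypothesis. Then $0\le h_{\theta}\le B$ for every $\theta$, so the constant $H\equiv B$ is a measurable envelope with $PH^{2}=B^{2}<\infty$. For fixed $\theta$, the section $h_{\theta}$ is a definable function on $\Xi$. By cell decomposition its graph is a finite union of cells, so $h_{\theta}$ is Borel measurable, and hence $\mathcal{G}$-measurable. It is bounded, so it is integrable. Thus $f\in\ell^{\infty}(X)$ and the setup of Section~\ref{sec:setup} holds with $X=\Theta$.

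\emph{Conclusion and expected obstacle.} All hypotheses of Theorem~\ref{thm:tame-main} now hold, so its conclusions apply. The specific contrasts $\rho(r)=r^{2}$ and $\rho(r)=|r|$ are semialgebraic. Each is bounded on $R$, since $R$ is bounded: $\Theta$ and $\Xi$ are compact or bounded, so $|y-m_{\theta}(x,t)|\le|y|+\lvert x\rvert(\lvert\beta_{1}\rvert+\lvert\beta_{2}\rvert)$ is uniformly bounded. The only delicate step is making sure $\rho$ and the semialgebraic data live in one common o-minimal structure. This is resolved by taking the structure in which $\rho$ is definable, which contains all semialgebraic sets. Beyond that, the argument is routine bookkeeping of closure properties.
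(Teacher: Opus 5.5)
Your proposal is correct and follows the paper's proof: $m_{\theta}$ is semialgebraic because it is built from affine maps and the threshold predicates, so $h=\rho\circ r$ and its total subgraph are definable, and Theorems~\ref{thm:def-vc} and~\ref{thm:tame-main} then apply. You also spell out three points the paper leaves implicit: the constant envelope $H\equiv B$, the Borel measurability of the sections (which the paper covers by its standing convention that $\mathcal{G}$ is the Borel $\sigma$-algebra), and the choice of a single o-minimal structure in which both $\rho$ and the semialgebraic data are definable.
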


\begin{remark}
For Proposition~\ref{prop:threshold}, the measurability hypotheses
are automatic under a simple one-sided approximation condition. Assume
that $\rho$ is continuous on the relevant bounded range and that there
exists a countable subclass $\Theta_{0}\subseteq\Theta$ with the property
that every $\theta=(\beta_{1},\beta_{2},s)\in\Theta$ admits a sequence
\[
\theta_{m}=(\beta_{1,m},\beta_{2,m},s_{m})\in\Theta_{0}
\]
with $\beta_{1,m}\to\beta_{1}$, $\beta_{2,m}\to\beta_{2}$, and $s_{m}\downarrow s$.
Then $h_{\theta_{m}}(x,t,y)\to h_{\theta}(x,t,y)$ for every $(x,t,y)\in\Xi$:
if $t<s$, eventually $t\le s_{m}$; if $t>s$, eventually $t>s_{m}$;
and if $t=s$, the convention $\mathbf{1}_{\{t\le s\}}$ is preserved
by the one-sided approximation $s_{m}\downarrow s$. Hence the class
is pointwise measurable, and therefore it satisfies Assumption~\ref{ass:ep-meas}. This covers
the standard box-constrained parameter sets $\Theta=B_{1}\times B_{2}\times[\underline{s},\bar{s}]$
when $B_{1}$ and $B_{2}$ admit countable dense subsets and the thresholds
are approximated from above by a countable set such as $(\mathbb{Q}\cap[\underline{s},\bar{s}))\cup\{\bar{s}\}$.
\end{remark}

\subsection{\texorpdfstring{Non-Lipschitz $\ell_{p}$-type objectives for rational
$0<p<1$}{Non-Lipschitz lp-type objectives for rational 0<p<1}}

A different obstruction to the continuity-based framework of \citet{Banholzer2022}
is the failure of Lipschitz continuity. \citet{YeYingShaoLiChen2017}
study robust and sparse $L_{p}$-norm support vector regression with
$0<p<1$, while \citet{BucciniEtAl2020} analyze related large-scale
regression problems with nonconvex loss and penalty. For the semialgebraic
verification below we fix rational $p\in(0,1)$. After the standard
elimination of slack variables, the support-vector-regression objective
considered in \citet{YeYingShaoLiChen2017} takes the form 
\[
\lambda\bigl(\lVert w\rVert_{p}^{p}+|b|^{p}\bigr)+C\frac{1}{n}\sum_{i=1}^{n}\bigl|y_{i}-(w^{\top}z_{i}+b)\bigr|.
\]
This is exactly the penalized SAA objective associated with the loss
\begin{equation}
h_{w,b}(z,y)=C\lvert y-(w^{\top}z+b)\rvert+\lambda\bigl(\lVert w\rVert_{p}^{p}+|b|^{p}\bigr),\qquad0<p<1.\label{eq:lp-loss}
\end{equation}
For fixed rational $p\in(0,1)$, the map $u\mapsto|u|^{p}$ is definable
already in the semialgebraic structure, but it is not Lipschitz at
the origin. Hence, on parameter domains meeting the relevant coordinate
zero sets, \eqref{eq:lp-loss} lies outside the Lipschitz-based assumptions
imposed in the continuity framework of \citet{Banholzer2022}, while
remaining tame in the present setting.

\begin{proposition}\label{prop:lp} Fix rational $p\in(0,1)$ and finite
constants $C,\lambda\ge0$. Let $X\subseteq\mathbb{R}^{m}\times\mathbb{R}$
be compact and semialgebraic, and let $\Xi\subseteq\mathbb{R}^{m}\times\mathbb{R}$
be bounded and semialgebraic. Then the class generated by \eqref{eq:lp-loss} is
pointwise measurable, bounded, and definable in the semialgebraic
structure, hence satisfies Assumption~\ref{ass:ep-meas}; its total subgraph is definable,
Theorem~\ref{thm:def-vc} yields that it is a VC-subgraph class, and
Theorem~\ref{thm:tame-main} applies to the associated penalized SAA
problem.\end{proposition}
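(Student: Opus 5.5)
The plan is to verify the hypotheses of Theorem~\ref{thm:tame-main} one at a time and then invoke it. The hypotheses are definability of $X$, $\Xi$ and the total subgraph, a square-integrable envelope, and Assumption~\ref{ass:ep-meas}. The last of these will come from pointwise measurability via Remark~\ref{rem:ptwise-meas}.

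\emph{Definability.} For rational $p=a/b$ with $a,b$ positive integers, the graph of $u\mapsto|u|^{p}$ is the semialgebraic set
\[
\{(u,v): v\ge0,\ v^{b}=|u|^{a}\}.
\]
Here $|u|$ is itself semialgebraic, since its graph is $\{(u,s):s\ge 0,\ s^{2}=u^{2}\}$. Sums, products and compositions of semialgebraic functions are semialgebraic. Hence $(w,b,z,y)\mapsto h_{w,b}(z,y)$ in \eqref{eq:lp-loss} is semialgebraic on $X\times\Xi$. Its total subgraph $\{h\le t\}$ is then a Boolean/projection combination of semialgebraic sets, so it is semialgebraic by Tarski--Seidenberg. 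Theorem~\ref{thm:def-vc} then gives the VC-subgraph property.

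\emph{Boundedness and measurability.} Since $X$ and $\Xi$ are bounded, say $\|(w,b)\|\le R$ and $\|(z,y)\|\le K$, we get
\[
0\le h\le C(K+RK+R)+\lambda (m+1)R^{p}.
\]
So a constant envelope $H\in L_{2}(P)$ is available. Each section $h_{x}$ is continuous in $\xi$, hence Borel measurable. For pointwise measurability, note that $h$ is jointly continuous, because $u\mapsto|u|^{p}$ is continuous (though not Lipschitz). Take $X_{0}$ to be a countable dense subset of the compact metric space $X$. For any $x\in X$, choose $x_{m}\in X_{0}$ with $x_{m}\to x$; then $h_{x_{m}}(\xi)\to h_{x}(\xi)$ for every $\xi$. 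By Remark~\ref{rem:ptwise-meas}, the class is $P$-measurable, and with the bounded envelope all parts of Assumption~\ref{ass:ep-meas} follow.

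\emph{Conclusion.} All hypotheses of Theorem~\ref{thm:tame-main} are now in place, which gives the stated SAA conclusions. No step is a real obstacle. The only point needing care is the semialgebraicity of $|u|^{p}$, which must be stated for rational $p$ through the polynomial relation above, together with the observation that continuity, not Lipschitz continuity, is all that pointwise measurability requires.
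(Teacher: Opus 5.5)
Your proposal is correct and follows the paper's proof essentially step for step. You give a semialgebraic description of the graph of $u\mapsto|u|^{p}$ (via $|u|$, where the paper splits into $u\ge0$ and $u<0$), deduce definability of the loss and its total subgraph, obtain boundedness from the boundedness of $X$ and $\Xi$, and get pointwise measurability from continuity of $x\mapsto h_{x}(\xi)$ along a countable dense $X_{0}$, before invoking Theorems~\ref{thm:def-vc} and~\ref{thm:tame-main}; your explicit envelope constant is a small addition the paper leaves implicit, though you should avoid reusing $b$ for both the denominator of $p$ and the intercept.
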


\begin{remark} The examples in this section show why ``without smoothness''
in the title should be interpreted literally. In some cases the objective
jumps under arbitrarily small perturbations of the parameter, as in
direct $0$--$1$ classification and threshold regression. In others
the objective remains continuous but fails the Lipschitz or differentiability
properties built into continuity-based Banach-space analyses, as in
the $\ell_{p}$-type objectives with rational $0<p<1$. The natural
ambient space is therefore $\ell^{\infty}(X)$ rather than a Banach
space of continuous sample paths. \end{remark}

\section{Concluding remarks}

\label{sec:conclusion}

The main point of the manuscript is that the optimization-theoretic
results for SAAs extend beyond the smoothness-based formulations under
the correct abstraction. That abstraction is via the bounded-function
space $\ell^{\infty}(X)$ together with the infimum functional and
Hoffmann--Jørgensen outer probability. The weak-limit input comes
from the directional differentiability theorem of \citet{Westerhout2024};
the outer almost-sure and mean-rate inputs come from LILs and maximal inequalities;
and VC bounds under o-minimal definability provide a natural way to
verify the definability and VC-subgraph hypotheses for discontinuous
and nonsmooth classes. Envelope and measurability assumptions are
checked separately, with pointwise measurability used as a convenient
sufficient condition where needed.
A natural next step is to extend
the analysis beyond tame topological settings to nonsmooth settings without o-minimal definability assumptions and with relaxed measurability conditions.
We leave this to future work.

\appendix

\section{Proofs and auxiliary arguments}

\label{app:proofs}

\subsection{Deterministic perturbation lemmas}
\begin{proof}[Proof of Theorem~\ref{thm:deterministic}]
For part (a), fix $x\in X$. Since $\hat{f}_{n}(x)\ge f(x)-\Delta_{n}\ge\psi^{*}-\Delta_{n}$,
taking the infimum over $x$ yields $\hat{\psi}_{n}\ge\psi^{*}-\Delta_{n}$.
Interchanging the roles of $f$ and $\hat{f}_{n}$ gives $\psi^{*}\ge\hat{\psi}_{n}-\Delta_{n}$.
Hence \eqref{eq:value-perturbation} holds.

For part (b), let $x\in\hat{S}_{n}^{\delta}$. Then 
\[
f(x)\le\hat{f}_{n}(x)+\Delta_{n}\le\hat{\psi}_{n}+\delta+\Delta_{n}\le\psi^{*}+2\Delta_{n}+\delta,
\]
where the last step uses \eqref{eq:value-perturbation}. Thus $x\in S^{2\Delta_{n}+\delta}$,
proving the first inclusion in \eqref{eq:near-transfer}. The second
inclusion follows by symmetry.

Part (c) is an immediate restatement of the first inclusion in part
(b): if $\hat{x}_{n}\in\hat{S}_{n}^{\delta}$, then 
\[
\hat{x}_{n}\in S^{2\Delta_{n}+\delta},
\]
which is exactly \eqref{eq:excess-risk-bound}.

For part (d), apply the sharp-growth inequality \eqref{eq:sharp-growth}
to $x=\hat{x}_{n}$ and combine it with part (c): 
\[
\alpha\,\operatorname{dist}(\hat{x}_{n},S^{0})^{\kappa}\le f(\hat{x}_{n})-\psi^{*}\le2\Delta_{n}+\delta.
\]
This is \eqref{eq:distance-bound}. 
\end{proof}
\begin{proof}[Proof of Corollary~\ref{cor:generic-mode}]
The value bounds follow immediately from
$|\hat{\psi}_{n}-\psi^{*}|\le\Delta_{n}$. For empirical minimizers, let
$A_{\mathrm{min},N}$ and $A_{\mathrm{min}}$ be as in
Assumption~\ref{ass:emp-min-membership}. On $A_{\mathrm{min},N}$ and for
$n\ge N$, Theorem~\ref{thm:deterministic} gives
\[
f(\hat{x}_{n})-\psi^{*}\le2\Delta_{n}+\delta_{n},
\]
and, under sharp growth,
\[
\operatorname{dist}(\hat{x}_{n},S^{0})^{\kappa}
\le\alpha^{-1}(2\Delta_{n}+\delta_{n}).
\]
For the outer-probability assertion, fix $\eta>0$ and choose $N$ so large that
$\mathbb{P}(A_{\mathrm{min},N}^{c})<\eta/2$. For all $n\ge N$, the preceding
pointwise inequalities on $A_{\mathrm{min},N}$, together with
$\Delta_{n}=O_{\mathbb{P}^{*}}(r_{n})$ and $\delta_{n}=O(r_{n})$, give the
claimed $O_{\mathbb{P}^{*}}$ bounds.

For the outer almost-sure assertion, choose versions of the minimal measurable
majorants $\Delta_{n}^{*}$ such that $\limsup_{n}\Delta_{n}^{*}/r_{n}<\infty$
on a full-probability event $A_{\Delta}$. Since
$\lvert\hat{\psi}_{n}-\psi^{*}\rvert\le\Delta_{n}$ pointwise whenever the
objectives are bounded, the minimal measurable majorant of
$\lvert\hat{\psi}_{n}-\psi^{*}\rvert$ is bounded above by $\Delta_{n}^{*}$
a.s., and the outer almost-sure value bound follows. For empirical minimizers,
write $B_{f}=\sup_{x\in X}f(x)-\psi^{*}<\infty$. For every $n$, the deterministic
inequality on $A_{\mathrm{min},n}$ and the trivial bound
$f(\hat{x}_{n})-\psi^{*}\le B_{f}$ on its complement give
\[
f(\hat{x}_{n})-\psi^{*}\le2\Delta_{n}+\delta_{n}+B_{f}\mathbf{1}_{A_{\mathrm{min},n}^{c}}.
\]
Hence the minimal measurable majorant of the left-hand side is bounded a.s. by
$2\Delta_{n}^{*}+\delta_{n}+B_{f}\mathbf{1}_{A_{\mathrm{min},n}^{c}}$. On
$A_{\Delta}\cap A_{\mathrm{min}}$, the first term has order $r_{n}$, the
second has order $r_{n}$, and the indicator term is eventually zero. This proves
the outer almost-sure excess-risk bound. Under sharp growth, the same argument
with
\[
\operatorname{dist}(\hat{x}_{n},S^{0})\le\left\{\alpha^{-1}\left(2\Delta_{n}^{*}+\delta_{n}+B_{f}\mathbf{1}_{A_{\mathrm{min},n}^{c}}\right)\right\}^{1/\kappa}
\]
as a measurable majorant gives the stated distance bound.

For the outer-mean assertion, the value bound is again immediate. For the
empirical-minimizer bounds, assume that
$\mathbb{P}(A_{\mathrm{min},n}^{c})=O(r_{n})$. Since $f\in\ell^{\infty}(X)$,
$B_{f}=\sup_{x\in X}f(x)-\psi^{*}<\infty$. For each $n$,
\[
f(\hat{x}_{n})-\psi^{*}
\le 2\Delta_{n}+\delta_{n}+B_{f}\mathbf{1}_{A_{\mathrm{min},n}^{c}}.
\]
Applying monotonicity, positive homogeneity, and subadditivity of outer
expectation gives
\[
\mathbb{E}^{*}\bigl[f(\hat{x}_{n})-\psi^{*}\bigr]
\le2\mathbb{E}^{*}\Delta_{n}+\delta_{n}+B_{f}\mathbb{P}(A_{\mathrm{min},n}^{c})=O(r_{n}).
\]
Under sharp growth, the same argument applied to the distance bound, with
$B_{f}/\alpha$ on $A_{\mathrm{min},n}^{c}$, yields the stated
$\kappa$-moment bound.
\end{proof}

\subsection{Weak-limit arguments}
\begin{proof}[Proof of Theorem~\ref{thm:weakvalue}]
Let $\iota:\ell^{\infty}(X)\to\mathbb{R}$ be the infimum map from
\eqref{eq:iota-def}. By Theorem~\ref{thm:infdir}, $\iota$ is Hadamard
directionally differentiable at $f$ with derivative 
\[
\iota_{f}'(g)=\lim_{\varepsilon\downarrow0}\inf_{x\in S^{\varepsilon}}g(x).
\]
Since $\iota(\hat{f}_{n})=\hat{\psi}_{n}$ and $\iota(f)=\psi^{*}$,
after modifying $\hat{f}_{n}$ on a null set, we may for all sufficiently
large $n$ regard $\hat{f}_{n}$ as maps into $\ell^{\infty}(X)$. Theorem~\ref{thm:directional-delta} applies with
$U=\ell^{\infty}(X)$, $V=\mathbb{R}$, $\phi=\iota$, and $Y_{n}=\hat{f}_{n}$.
It yields 
\[
\tau_{n}\bigl(\iota(\hat{f}_{n})-\iota(f)\bigr)\rightsquigarrow\iota_{f}'(G)
\]
and also the first-order directional expansion
\[
\tau_{n}\bigl(\iota(\hat{f}_{n})-\iota(f)\bigr)-\iota_{f}'\bigl(\tau_{n}(\hat{f}_{n}-f)\bigr)=o_{\mathbb{P}^{*}}(1).
\]
Substituting $\iota(\hat{f}_{n})=\hat{\psi}_{n}$ and $\iota(f)=\psi^{*}$
gives \eqref{eq:weak-value} and \eqref{eq:weak-linearization}. 
\end{proof}
\begin{proof}[Proof of Corollary~\ref{cor:donsker}]
Apply Theorem~\ref{thm:weakvalue} with $\tau_{n}=\sqrt{n}$ to
obtain \eqref{eq:sqrt-value-limit}. Weak convergence in $\ell^{\infty}(X)$
implies boundedness in outer probability of the norms of the centered
process, and therefore 
\[
\Delta_{n}=\lVert\hat{f}_{n}-f\rVert_{\infty}=O_{\mathbb{P}^{*}}(n^{-1/2}).
\]
Corollary~\ref{cor:generic-mode}(i) then yields \eqref{eq:sqrt-rate},
\eqref{eq:sqrt-excess}, and, under sharp growth, \eqref{eq:sqrt-distance}. 
\end{proof}

\subsection{LIL for VC-subgraph classes}

\begin{theorem}[Yukich LIL]\label{thm:yukich} Let $\mathcal{H}$ be
a class of $\mathcal{G}$-measurable functions with envelope $H$, meaning that $\lvert h\rvert\le H$ for every $h\in\mathcal{H}$,
and suppose that $H\in L_{2}(P)$. Then, if 
\begin{equation}
\int_{0}^{1}\left(\frac{\log N(\varepsilon,\mathcal{H})}{\mathrm{LL}(\log N(\varepsilon,\mathcal{H}))}\right)^{1/2}\,d\varepsilon<\infty,\label{eq:yukich-integral}
\end{equation}
where, following \citet[Definition~1.2]{Yukich1990}, $N(\varepsilon,S,\mathcal{H})$
for a nonempty finite set $S=\{\xi_{1},\dots,\xi_{m}\}\subseteq\Xi$
denotes the least integer $N$ such that there exist $h_{1},\dots,h_{N}\in\mathcal{H}$
with the property that for every $h\in\mathcal{H}$ there is some
$j\in\{1,\dots,N\}$ satisfying 
\[
\frac{1}{m}\sum_{i=1}^{m}\bigl(h(\xi_{i})-h_{j}(\xi_{i})\bigr)^{2}\le\varepsilon^{2}\frac{1}{m}\sum_{i=1}^{m}H(\xi_{i})^{2},
\]
and where $N(\varepsilon,\mathcal{H})=\sup_{S}N(\varepsilon,S,\mathcal{H})$,
then $\mathcal{H}$ is a log-log class for $P$ in the sense of
\citet{Yukich1990}. Consequently, whenever the random variables
$\sup_{h\in\mathcal{H}}\lvert(\mathbb{P}_{n}-P)h\rvert$ are measurable,
\begin{equation}
\limsup_{n\to\infty}\sup_{h\in\mathcal{H}}\left|\left(\frac{n}{2\log\log n}\right)^{1/2}(\mathbb{P}_{n}-P)h\right|<\infty\qquad\text{almost surely.}\label{eq:yukich-blil}
\end{equation}
\end{theorem}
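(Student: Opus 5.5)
The statement is essentially a specialization of the main theorem of \citet{Yukich1990}, so the plan has two parts. First, check that our hypotheses match Yukich's definitions exactly. Second, sketch why the entropy condition \eqref{eq:yukich-integral} gives the bounded LIL \eqref{eq:yukich-blil}, so that the citation is transparent rather than a black box. For the first part, I would check three things against \citet[Definition~1.2]{Yukich1990}: the random covering numbers $N(\varepsilon,S,\mathcal{H})$, normalized by the empirical second moment of $H$; the uniform entropy $N(\varepsilon,\mathcal{H})=\sup_{S}N(\varepsilon,S,\mathcal{H})$; and the envelope condition $PH^{2}<\infty$. Once these match, Yukich's theorem says that \eqref{eq:yukich-integral} makes $\mathcal{H}$ a log-log class for $P$. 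By his definition of a log-log class, this means exactly that $\limsup_{n}\sup_{h}|(n/(2\log\log n))^{1/2}(\mathbb{P}_{n}-P)h|<\infty$ almost surely whenever the suprema are measurable. The measurability of $\sup_{h}|(\mathbb{P}_{n}-P)h|$ is assumed in the statement, which is why the conclusion is phrased conditionally.

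To make the mechanism explicit, I would follow the standard blocking and chaining route. Step one: pass to the geometric subsequence $n_{k}=2^{k}$. A L\'evy--Ottaviani maximal inequality for sums of i.i.d.\ $\ell^{\infty}(\mathcal{H})$-valued summands, legitimate because the suprema are measurable, reduces the LIL to showing that
\[
\sum_{k}\mathbb{P}\Bigl(\sup_{h}\bigl|\textstyle\sum_{i\le n_{k}}(h(\xi_{i})-Ph)\bigr|>M\sqrt{n_{k}\log\log n_{k}}\Bigr)<\infty
\]
for some $M$. Step two: symmetrize with Rademacher signs. Conditionally on the sample, the process $h\mapsto n^{-1/2}\sum_{i}e_{i}h(\xi_{i})$ is then sub-Gaussian with respect to the empirical $L_{2}(\mathbb{P}_{n})$ metric. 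Dividing by $(\mathbb{P}_{n}H^{2})^{1/2}$ turns this metric into the normalized one used to define $N(\varepsilon,S,\mathcal{H})$. Step three: truncate the envelope at a level of order $\sqrt{n_{k}/\log\log n_{k}}$. The truncated remainder is handled by $PH^{2}<\infty$ and a Borel--Cantelli argument on the big jumps. Step four: use the strong law for $\mathbb{P}_{n}H^{2}$ to replace the random normalization by the constant $PH^{2}$ eventually.

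The main obstacle is step two, specifically the chaining at tail level $\exp(-c\log\log n_{k})$ rather than at fixed probability. A plain Dudley bound would require $\int(\log N)^{1/2}$ to be finite. The weaker condition \eqref{eq:yukich-integral} suffices only because, at the deviation level $\sqrt{\log\log n}$, levels $j$ of the chain with $\log N(2^{-j})\lesssim\log\log n$ can be paid for entirely by the $\log\log n$ budget. Only the finer levels, with $\log N\gg\log\log n$, need the full entropy, and even there the cost is reduced by the factor $\mathrm{LL}(\log N)^{-1/2}$. I would try to organize this by splitting the chain at the level $j_{k}$ where $\log N(2^{-j_{k}})\approx\log\log n_{k}$. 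For $j\le j_{k}$ I would use a union bound with Gaussian tails. For $j>j_{k}$ I would allocate deviation thresholds proportional to $2^{-j}(\log N(2^{-j})/\mathrm{LL}(\log N(2^{-j})))^{1/2}$ and sum the resulting probabilities over $k$. The remaining steps (desymmetrization and Borel--Cantelli over the blocks) are routine, and for the detailed calculation at this point I would defer to \citet{Yukich1990}.
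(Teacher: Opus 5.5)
Your proposal is correct and matches the paper. The paper's proof consists solely of citing \citet[Theorem~5]{Yukich1990}, once the normalized covering numbers, the uniform entropy $N(\varepsilon,\mathcal{H})$, and the envelope condition $H\in L_{2}(P)$ have been aligned with Yukich's Definition~1.2, exactly as in your first paragraph. Your chaining sketch goes beyond what the paper does and is not needed. If you keep it, note that the fine-level union bounds only close once the chain is stopped at a scale of order $\sqrt{\log\log n/n}$, with the remainder bounded by Cauchy--Schwarz: that truncation is what guarantees $\mathrm{LL}(\log N(2^{-j},\mathcal{H}))\lesssim\log\log n$ on the retained levels.
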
 
\begin{proof}
This is \citet[Theorem~5]{Yukich1990}. 
\end{proof}
\begin{proof}[Proof of Proposition~\ref{prop:vc-yukich}]
Let $v<\infty$ denote the VC-subgraph dimension of $\mathcal{H}$.
By the VC covering theorem \citep[Theorem~2.6.7]{vdVW2023}, there
exist constants $C,a>0$, depending only on $v$, such that for every
finitely supported probability measure $Q$ and every $0<\eta<1$,
\begin{equation}
N\bigl(\eta\lVert H\rVert_{Q,2},\mathcal{H},L_{2}(Q)\bigr)\le C\eta^{-a}.\label{eq:vc-l2-cover}
\end{equation}
Increasing $C$ if necessary, we may take the centers of these covers
to belong to $\mathcal{H}$: start from an $\eta/2$-cover, discard
empty balls, and choose one element of $\mathcal{H}$ from each remaining
ball.

Fix $0<\varepsilon<1$ and let $S=\{\xi_{1},\dots,\xi_{m}\}\subseteq\Xi$
be a nonempty finite set. Write 
\[
Q_{S}=\frac{1}{m}\sum_{j=1}^{m}\delta_{\xi_{j}}.
\]
If $\lVert H\rVert_{Q_{S},2}=0$, then $H(\xi_{j})=0$ for every $j$,
and the envelope property implies that $h(\xi_{j})=0$ for all $h\in\mathcal{H}$
and all $j$. Hence $N(\varepsilon,S,\mathcal{H})=1$. If
$\lVert H\rVert_{Q_{S},2}=\infty$, then any single element of a nonempty
$\mathcal{H}$ is admissible in Yukich's finite-sample entropy definition,
since the right-hand side is infinite and the functions in $\mathcal{H}$
are real-valued on the finite set $S$; the empty-class case is trivial.
It remains to consider the case $0<\lVert H\rVert_{Q_{S},2}<\infty$.
Then any $L_{2}(Q_{S})$-cover of $\mathcal{H}$ with centers in $\mathcal{H}$
and radius $\varepsilon\lVert H\rVert_{Q_{S},2}$ gives an admissible family
in Definition~1.2 of \citet{Yukich1990}, because 
\[
\lVert h-h'\rVert_{Q_{S},2}^{2}=\frac{1}{m}\sum_{j=1}^{m}\bigl(h(\xi_{j})-h'(\xi_{j})\bigr)^{2}
\]
and 
\[
\lVert H\rVert_{Q_{S},2}^{2}=\frac{1}{m}\sum_{j=1}^{m}H(\xi_{j})^{2}.
\]
Therefore 
\[
N(\varepsilon,S,\mathcal{H})\le N\bigl(\varepsilon\lVert H\rVert_{Q_{S},2},\mathcal{H},L_{2}(Q_{S})\bigr)\le C\varepsilon^{-a}.
\]
Taking the supremum over all finite $S$ yields 
\[
N(\varepsilon,\mathcal{H})\le C\varepsilon^{-a},\qquad0<\varepsilon<1.
\]
Consequently 
\[
\log N(\varepsilon,\mathcal{H})\le c_{0}+a\log(1/\varepsilon),\qquad0<\varepsilon<1,
\]
for a suitable constant $c_{0}$, and the entropy integral \eqref{eq:yukich-integral}
is finite because its integrand is dominated near $0$ by a constant
multiple of $\sqrt{\log(1/\varepsilon)}$, which is integrable on
$(0,1)$. The conclusion \eqref{eq:yukich-blil} now follows from
Theorem~\ref{thm:yukich}. Since Assumption~\ref{ass:ep-meas}(iii)
ensures that, for each $n$, the random variable
$\sup_{h\in\mathcal{H}}\lvert(\mathbb{P}_{n}-P)h\rvert$ is measurable,
this is precisely the almost-sure bound \eqref{eq:vc-blil-rate}. 
\end{proof}

\subsection{Convergence mode transfer and definable classes}
\begin{proof}[Proof of Theorem~\ref{thm:as-mean}]
Part~(i) is Corollary~\ref{cor:generic-mode}(ii) with $r_{n}=b_{n}$.
Part~(ii) is Corollary~\ref{cor:generic-mode}(iii) with $r_{n}=\rho_{n}$. 
\end{proof}
\begin{proof}[Proof of Proposition~\ref{prop:vc-mean}]
By the outer-expectation maximal inequality for $P$-measurable
classes \citep[Theorem~2.14.1]{vdVW2023},
\[
\mathbb{E}^{*}\sup_{h\in\mathcal{H}}\lvert\mathbb{G}_{n}h\rvert\lesssim J\bigl(1,\mathcal{H}\mid H,L_{2}\bigr)\,\lVert H\rVert_{P,2},
\]
where $\mathbb{G}_{n}=\sqrt{n}(\mathbb{P}_{n}-P)$, the notation $A\lesssim B$
means $A\le CB$ for an absolute constant $C$, and 
\[
J\bigl(1,\mathcal{H}\mid H,L_{2}\bigr)=\sup_{Q}\int_{0}^{1}\sqrt{1+\log N\bigl(\varepsilon\lVert H\rVert_{Q,2},\mathcal{H},L_{2}(Q)\bigr)}\,d\varepsilon
\]
is the uniform entropy integral, with the supremum over finitely supported
probability measures $Q$. Since $\mathcal{H}$ is VC-subgraph, the
VC covering theorem \citep[Theorem~2.6.7]{vdVW2023} implies a polynomial
bound on the normalized covering numbers, and hence $J(1,\mathcal{H}\mid H,L_{2})<\infty$.
Thus $\mathbb{E}^{*}\sup_{h\in\mathcal{H}}\lvert\mathbb{G}_{n}h\rvert=O(1)$,
and dividing by $\sqrt{n}$ yields \eqref{eq:vc-mean}. 
\end{proof}
\begin{proof}[Proof of Theorem~\ref{thm:tame-main}]
Because $|h_{x}|\le H$ for every $x$, we have $|f(x)|\le PH$ and
$|\hat{f}_{n}(x)|\le\mathbb{P}_{n}H$ almost surely, so indeed $f\in\ell^{\infty}(X)$
and $\Delta_{n}<\infty$ almost surely. By Theorem~\ref{thm:def-vc},
the definability of the total subgraph implies that $\mathcal{H}$
is a VC-subgraph class. The assumed measurability hypotheses allow
Theorem~\ref{thm:vc-donsker} to be applied, so the empirical process
indexed by $\mathcal{H}$ is $P$-Donsker. Applying the pullback operator
from Remark~\ref{rem:pullback} to that process gives weak convergence of 
\[
x\longmapsto\sqrt{n}(\mathbb{P}_{n}-P)h_{x}=\sqrt{n}(\hat{f}_{n}-f)(x)
\]
in $\ell^{\infty}(X)$. Corollary~\ref{cor:donsker} then supplies
the announced root-$n$ outer-probability consequences for values
and $\varepsilon$-minimizers. The same measurability hypotheses permit
an application of Proposition~\ref{prop:vc-mean}, which gives \eqref{eq:tame-mean}
and hence the outer-mean value conclusion through Theorem~\ref{thm:as-mean}(ii);
the empirical-minimizer outer-mean conclusions require the membership-event
tail condition stated there. The measurability hypotheses also ensure that
the norms defining $\Delta_{n}$ are measurable, so Proposition~\ref{prop:vc-yukich}
gives \eqref{eq:tame-blil} and Theorem~\ref{thm:as-mean}(i) yields the
outer almost-sure consequences.
\end{proof}

\subsection{Verification of the examples}
\begin{proof}[Proof of Proposition~\ref{prop:classification}]
By assumption, the score map $(\theta,z)\mapsto N_{\theta}(z)$ is
definable on $\Theta\times\mathbb{R}^{m}$. Hence the set 
\[
\{(\theta,z,y)\in\Theta\times\Xi:yN_{\theta}(z)\le0\}
\]
is definable, and therefore the classification loss \eqref{eq:nn-class-loss}
is definable. Boundedness is immediate because the loss is $\{0,1\}$-valued.
It follows that the total subgraph of the class is definable, so Theorem~\ref{thm:def-vc}
yields the VC-subgraph conclusion. The assumed measurability hypotheses
then allow Theorem~\ref{thm:tame-main} to supply the stated SAA
consequences. The perceptron model \eqref{eq:perceptron-loss} is
the special case $N_{\theta}(z)=w^{\top}z+b$, which is semialgebraic. 
\end{proof}
\begin{proof}[Proof of Proposition~\ref{prop:threshold}]
The regression map \eqref{eq:threshold-reg} is definable because
it is assembled from affine maps and the definable threshold predicates
$\{t\le s\}$ and $\{t>s\}$. Therefore $(\theta,x,t,y)\mapsto y-m_{\theta}(x,t)$
is definable on $\Theta\times\Xi$, and so is $h_{\theta}(x,t,y)=\rho(y-m_{\theta}(x,t))$
because $\rho$ is definable on the relevant bounded range. Boundedness
is assumed. Hence the total subgraph of the class is definable, and
the assumed measurability hypotheses allow Theorems~\ref{thm:def-vc}
and \ref{thm:tame-main} to apply. 
\end{proof}
\begin{proof}[Proof of Proposition~\ref{prop:lp}]
Write $p=r/s$ in lowest terms with integers $0<r<s$. The graph
of the map $u\mapsto|u|^{p}$ is definable in the semialgebraic structure
because it is the union of the two semialgebraic sets
\[
\{(u,t):u\ge0,\ t\ge0,\ t^{s}=u^{r}\}
\cup\{(u,t):u<0,\ t\ge0,\ t^{s}=(-u)^{r}\}.
\]
It follows that $(w,b)\mapsto\lVert w\rVert_{p}^{p}+|b|^{p}$ is definable
on the compact set $X$. The residual term $(w,b,z,y)\mapsto|y-(w^{\top}z+b)|$
is definable as well. Therefore the full loss \eqref{eq:lp-loss}
is definable on $X\times\Xi$ and bounded because both $X$ and $\Xi$
are bounded.

To verify pointwise measurability, choose a countable dense subset
$X_{0}\subseteq X$, which exists because $X$ is compact and metrizable.
For each fixed $(z,y)\in\Xi$, the map $(w,b)\mapsto h_{w,b}(z,y)$
is continuous on $X$: the residual term is continuous, and $u\mapsto|u|^{p}$
is continuous on $\mathbb{R}$ for every $p>0$. Hence if $x=(w,b)\in X$
and $x_{m}=(w_{m},b_{m})\in X_{0}$ satisfy $x_{m}\to x$, then $h_{x_{m}}(z,y)\to h_{x}(z,y)$
for every $(z,y)\in\Xi$. Thus the class is pointwise measurable.

Theorem~\ref{thm:def-vc} then yields the VC-subgraph property, and
Theorem~\ref{thm:tame-main} gives the conclusion. 
\end{proof}

\end{document}